\definecolor{lightblue}{rgb}{.30,.95,.2}
\newcommand{\mb}[1]{\mathbf{ #1}}
\newcommand{\alga}{\mathbf A}
\newcommand{\algb}{\mathbf B}
\newcommand{\algd}{\mathbf D}
\providecommand{\U}[1]{\protect\rule{.1in}{.1in}}
\newtheorem{theorem}{Theorem}
\theoremstyle{plain}
\newtheorem{corollary}{Corollary}
\newtheorem{definition}{Definition}
\newtheorem{example}{Example}
\newtheorem{lemma}{Lemma}
\newtheorem{proposition}{Proposition}
\newtheorem{remark}{Remark}
\numberwithin{equation}{section}
\begin{document}
\title{Orthogonal relational systems}
\author[Bonzio S.]{S. Bonzio}
\address{Stefano Bonzio, University of Cagliari\\
Italy}
\email{stefano.bonzio@gmail.com}

\author[Chajda I.]{I. Chajda}
\address{Ivan Chajda, Palack\'{y} University, Olomouc\\
Czech Republic}
\email{ivan.chajda@upol.cz}

\author[Ledda A.]{A. Ledda}
\address{Antonio Ledda, University of Cagliari\\
Italy}
\email{antonio.ledda@unica.it}
%
%

\begin{abstract}
In this paper we discuss the concept of relational system with involution. This system is called orthogonal if, for every pair of non-zero orthogonal elements, there exists a supremal element in their upper cone and the upper cone of orthogonal elements $x,\,x'$ is a singleton (i.e. $x,\,x'$ are complements each other). To every orthogonal relational system can be assigned a groupoid with involution. The conditions under which a groupoid is assigned to an orthogonal relational systems are investigated. 
We will see that many properties of the relational system can be captured by the associated groupoid. Moreover, these structures enjoy several desirable algebraic features such as, e.g., a direct decomposition representation and the strong amalgamation property.
\end{abstract}
\keywords{Relational system, involution, orthogonal elements, orthogonal relational system, orthogonal groupoid, Church variety, central element. MSC
classification 06A02, 20N02}
\maketitle

\section{Introduction}

It is superfluous to recall how important binary relational systems are for the whole of mathematics. The study of binary relations traces back to the work of J. Riguet \cite{Rig48}, and a first attempt to provide an algebraic theory of relational systems is due to Mal'cev \cite{Malcev}. A general investigation of quotients and homomorphisms of relational systems can be found in \cite{ChLa10}, where seminal notions from \cite{Ch04} are developed.
A  leading motivation for our discussion stems from the theory of \emph{semilattices}. In fact, semilattices can be equivalently presented as ordered sets as well as groupoids. This approach was widen to ordered sets whose ordering is directed. In this case the resulting groupoid needs not be, in general, a semilattice, but a \emph{directoid} (for details see \cite{Chbook}). We will see that many features of a relational system $\mathbf A=\langle A, R\rangle$ can be captured by means of the associated groupoid. Reflexivity, symmetry, transitivity or antisymmetry of $R$ can be equationally or quasi-equationally characterized in the groupoid \cite{ChLa13, ChLa14}. 

The concept of orthogonal poset was first considered in \cite{Ch14}, where an algebraic characterization of the system through the associated groupoid with involution is presented. In \cite{CGKGLP} this method was  generalized to cover the case of ordered sets with antitone involution. These ideas motivated us to extend the approach to general algebraic systems with involution and distinguished elements. In what follows, we develop this theory.

The paper is structured as follows: in \S\ 2 we present the notions of orthogonal relational system and orthogonal groupoids and show how the two concepts are mutually related. In \S\ 3 we present a decomposition theorem for a variety of orthogonal groupoids. Finally in \S\ 4 we show that the class of orthogonal groupoids enjoys the strong amalgamation property. 

\section{Relational systems with involution}
By a \emph{relational system} is meant a pair $\mathbf A=\langle A,R\rangle$, where $A$ is a non-empty set and $R$ is a binary relation on $A$, i.e. $R\subseteq A^{2}$. 
If $a,b\in A$, the {\it upper cone of $a,b$} is the set
\[
U_{R}(a,b)=\{c\in A:(a,c)\in R\text{ and }(b,c)\in R\}.
\]
In case $a=b$ we write $U_{R}(a)$ for $U_{R}(a,a)$.

A \emph{relational system with involution} is a triple $\mathbf A=\langle A,R,'\rangle$ such that $\langle A,R\rangle$ is a relational system and $':A\to A$ is a map such that, for all $a,b\in A$, $(a')'=a$, and if $(a,b)\in R$ then $(b',a')\in R$. 
For brevity sake, we will write $a''$ for $(a')'$. \\
A \emph{relational system with 1 and involution} is a quadruple $ \mathbf A=\langle A,R,', 1 \rangle $, such that the structure $ \langle A,R,'\rangle$ is a relational system with involution and $1$ is a constant in $A$ such that $ (x,1)\in R $ for each $ x\in A $. 

As customary, we indicate $ 1'$ by $ 0 $. Since $ (x,1)\in R $, then it follows that $ (0,x)\in R $ for all $ x\in A $.  One can easily see that, for any $ a,b\in A $, $ U_{R}(a,b)\neq\emptyset $, as $ 1\in U_{R}(a,b) $. 

Let $ \mathbf A=\langle A,R,', 1 \rangle $ and $ a,b\in A $. The elements $ a,b $ are \emph{orthogonal} (in symbols $ a\perp b $) when $ (a,b')\in R $ (or, equivalently, $ (b,a')\in R $). We say that an element $ w\in U_{R}(a,b) $ is a \emph{supremal element} for $ a,b $ if for each $ z\in U_{R}(a,b) $, with $ z\neq w $, then $ (w,z)\in R $. 
Obviously, if $ R $ is an order relation on $ A $, then the supremal element for $ a,b \in A$ coincides with $\sup(a,b) $. 

The following notion will be central in our discussion: 
\begin{definition}\label{sistema ortogonale}
A relational system $ \mathbf A=\langle A,R,', 1 \rangle $ is \emph{orthogonal} if: 
\begin{itemize}
\item[(a)] $ U_{R}(x,x')=\{1\} $ for each $ x\in A $;
\item[(b)] for all $x,y$, if $ x\perp y $ and $ x\neq 0\neq y $ then a supremal element for $ x,y $ exists.
\end{itemize}
\end{definition}
Let us recall a useful notion from \cite{ChLa13} and \cite{ChLa14}. 
\begin{definition}\label{gruppoide indotto}
Let $ \alga=\langle A,R\rangle  $ be a relational system. A binary operation $ + $ on $ A $ can be associated to $R$ as follows: 
\begin{itemize}
\item[(i)] if $ (x,y)\in R $ then $ x+y = y $;
\item[(ii)] if $ (x,y)\not\in R $ and $ (y,x)\in R $ then $ x+y=x $;
\item[(iii)] if $ (x,y)\not\in R $ and $ (y,x)\not\in R $ then $ x+y= y +x =z$, where $ z $ is an arbitrarily chosen element in $ U_{R}(x,y) $.
\end{itemize}
We call the structure $\mathbf{G}(A) = \langle A, + \rangle $ \emph{a groupoid induced} by the relational system $\alga=\langle A,R\rangle$.\footnote{The notation $\mathbf{G}(A)$ involves a mild notational abuse, since, as we shall see the groupoid induced by $\langle A,R\rangle$ is not necessarily unique.}
\end{definition}
Let us remark that, in general, for a relational system $ \alga=\langle A,R\rangle$, an induced groupoid $\mathbf{G}(A)$ is not univocally determined. This happens whenever there are elements $a,b$ in $A$ s.t. $(a,b), (b,a)\notin R$ and $U_R(a,b)$ contains more than one element. In this case indeed, $a+b$ will be arbitrarily chosen in $U_R(a,b)$.

Conversely, if an induced groupoid $\mathbf{G}(A)$ is given, then a relation $ R $ on $ A $ is uniquely determined by the binary operation $ + $ as follows: 
\[
(x,y)\in R\text{ if and only if }x+y= y,
\]

see \cite{ChLa13} and \cite{ChLa14} for details. 

In other words, any induced groupoid $ \mathbf{G}(A) $ stores all the information relative to the relational system  $ \alga=\langle A,R\rangle  $. Furthermore, whenever $ R $ is reflexive, the following obtains:

\begin{lemma}\label{x+y in U}
Let $\alga=\langle A,R\rangle  $ be a relational system and $R$ be a reflexive relation. Then $ x+y\in U_{R}(x,y) $ for all $ x,y\in A $.

\proof
If $ (x,y)\in R $ then, by Definition \ref{gruppoide indotto}-(i), $x+y=y$. Therefore, $(x, x+y)\in R$. Moreover, since $R$ is reflexive $(y,y)=(y, x+y)\in R$.
If $ (x,y)\not\in R $ and $ (y,x)\in R $ then, by Definition \ref{gruppoide indotto}-(ii), $x + y=x$. Therefore, $ (y,x)=(y,x+y)\in R $. Moreover, by reflexivity, $(x,x)=(x, x+y)\in R$.
Finally, if $ (x,y)\not\in R $ and $ (y,x)\not\in R $, the claim follows from Definition \ref{gruppoide indotto}-(iii).  
\endproof
\end{lemma}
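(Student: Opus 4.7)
The plan is to prove the membership $x+y\in U_R(x,y)$ by a straightforward case split that mirrors the three-clause definition of the induced operation $+$ given in Definition \ref{gruppoide indotto}. Since $U_R(x,y)=\{c\in A:(x,c)\in R\text{ and }(y,c)\in R\}$, in each case I only need to verify that both $(x,x+y)\in R$ and $(y,x+y)\in R$.

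First I would handle case (i): when $(x,y)\in R$, we have $x+y=y$, so $(x,x+y)=(x,y)\in R$ by hypothesis, and $(y,x+y)=(y,y)\in R$ by reflexivity. Next, case (ii): when $(x,y)\notin R$ and $(y,x)\in R$, we have $x+y=x$, so $(x,x+y)=(x,x)\in R$ again by reflexivity, and $(y,x+y)=(y,x)\in R$ by hypothesis. Finally, case (iii): when neither $(x,y)$ nor $(y,x)$ lies in $R$, the definition directly picks $x+y$ as an element of $U_R(x,y)$, so there is nothing left to check.

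There is essentially no obstacle: the lemma is really just a verification that Definition \ref{gruppoide indotto} was set up so as to make the induced operation land in the upper cone, and reflexivity is invoked precisely to cover the two diagonal memberships $(x,x),(y,y)\in R$ in the first two clauses. The third clause is automatic because the operation was defined by choice from $U_R(x,y)$; note, however, that the non-uniqueness flagged in the footnote to Definition \ref{gruppoide indotto} plays no role here, since any admissible choice lies in $U_R(x,y)$ by construction.
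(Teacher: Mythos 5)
Your proposal is correct and follows exactly the same three-case argument as the paper: clauses (i) and (ii) are handled by the hypothesis together with reflexivity for the diagonal pair, and clause (iii) is immediate since $x+y$ is chosen from $U_R(x,y)$. No differences worth noting.
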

Given a groupoid $ \mathbf{G}=\langle G, +\rangle $ it is possible to define a binary relation $ R_{G} $ on $ G $ as follows, for any $a,b\in G$:  
\[
(a,b)\in R_{G} \text{ if and only if }a+b=b.
\]
We call the relational system $ \mathbf{A}(G)=\langle G, R_{G}\rangle $ the \emph{induced relational system} by $\mathbf G $ and $ R_{G} $ the relation \emph{induced} by the groupoid $\mathbf G$.
For simplicity sake, whenever no danger of confusion is impending we drop subscripts from our notation.\\
Since Definition \ref{gruppoide indotto}, it is possible to associate an algebra (in particular a groupoid) to any relational system. However, since our aim is to obtain an algebra out of an \emph{orthogonal} relational system, we need to integrate this definition with a further condition, that takes into account orthogonality. 

\begin{definition}\label{def: gruppoide indotto da un ORS}
Let $ \mathbf A=\langle A,R,', 1 \rangle $ be an orthogonal relational system. Then a we associate to $R$ a binary operation $ + $ on $ A $ satisfying conditions (i), (ii), (iii) of Definition \ref{gruppoide indotto} and the following further condition: 
\begin{itemize}
\item[(iv)] if $ x\perp y $ with $ x\neq 0\neq y $, then $ x + y = y + x = w $,
\end{itemize}
where $ w $ is a supremal element in $ U_R(x,y) $.We call the structure $\mathbf{G}(A) = \langle A, +, ^{'}, 1 \rangle $ \emph{a groupoid induced} by the orthogonal relational system $\alga=\langle A,R, ', 1\rangle$. 
\end{definition}
Let us remark that the existence of a supremal element for a pair of orthogonal elements is guaranteed by Definition \ref{sistema ortogonale}.

We can now propose an algebraic counterpart of the notion of orthogonal relational system.
\begin{definition}\label{gruppoide ortogonale}
An \emph{orthogonal groupoid}, for short \emph{orthogroupoid}, is an algebra $ \mathbf{D}=\langle D, +, ', 1\rangle $ of type $ (2,1,0) $ such that $ \langle D,+\rangle $ is a groupoid and the following conditions hold:
\begin{itemize}
\item[(a)] $ x''\approx x $;
\item[(b)] $0+x\approx  x $ and $x+1\approx 1 $, where $ 0=1' $;
\item[(c)] $ x+ x'\approx 1 $;
\item[(d)] if $ x+z\approx  z $ and $ x'+ z\approx  z $ then $ z\approx 1 $;
\item[(e)] $ (((z+y)'+(z+x))' + (z+y)') +z' \approx z' $;
\item[(f)] $ x+(x+y) \approx x+y $ and $ y+(x+y)\approx x+y $.
\end{itemize}
\end{definition} 
Some basic properties of orthogroupoids are subsumed in the following lemmas.
\begin{lemma}\label{aritmetica1}
Let $ \mathbf{D}=\langle D, +, ', 1\rangle $ be a groupoid in the type $\langle 2,1,0\rangle$ satisfying conditions (a), (b), (c) and (e) of Definition \ref{gruppoide ortogonale} and $ R $ its induced relation. Then
\begin{itemize}
\item[(i)] $ 0' = 1$.
\item[(ii)] $ (x'+y)' + x = x $.
\item[(iii)] $ (0,x)\in R $ and $ (x,1)\in R $ for any $ x\in D $.
\item[(iv)] If $ (x,y)\in R $ then $ (y',x') \in R $.
\end{itemize}
\proof
(i) $ 0=1' $, thus $ 0'=1''=1 $. \\
(ii) Replacing $ x $ by $ y $ and $ z $ by $ x' $ in Definition \ref{gruppoide ortogonale}-(e), we get $ (((x'+y)'+(x'+y))'+(x'+y)')+x=x $. By (c) and (a) $ (x'+y)'+(x'+y)=1  $, thus $ ((x'+y)'+(x'+y))'=0  $. Then by (b) $(0+ (x'+y)') + x=(x'+y)' + x = x $. \\
(iii) Straightforward from the definition of induced relation.\\
(iv) Let $ (x,y)\in R $. Then, by definition of $R$, $ x+y= y $. By Definition \ref{gruppoide ortogonale}-(a) and item (ii) $y'+x'= (x+y)'+x'=x' $. Therefore $ (y',x')\in R $.
\endproof
\end{lemma}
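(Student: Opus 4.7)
The plan is to prove the four clauses in order, since (iv) depends on (ii), while (i) and (iii) are essentially immediate from the definitions. I would expect (ii) to be the main obstacle: it is the only clause that requires an inventive use of the long axiom (e) of Definition \ref{gruppoide ortogonale}, and once it is in hand the remaining items are short.

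Clause (i) is just unfolding: $0' = (1')' = 1'' = 1$ by the definition $0 = 1'$ together with axiom (a).

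For clause (ii), I would hunt for a substitution in axiom (e),
\[
(((z+y)'+(z+x))' + (z+y)') + z' \approx z',
\]
that makes the innermost subterm $(z+y)'+(z+x)$ collapse to $1$, so that successive applications of the unit laws in (b) strip the expression down to something of the form $(x'+y)' + x$. The natural choice is to substitute $z \mapsto x'$ and $x \mapsto y$, which forces $z+x$ and $z+y$ to coincide as $x'+y$. Then by axiom (c) applied with variable $x'+y$, together with axiom (a), the term $(x'+y)' + (x'+y)$ equals $1$, its involute is $0$, and axiom (b) gives $0 + (x'+y)' = (x'+y)'$. The right-hand side of (e) becomes $z' = (x')' = x$ by (a), so the instance reduces precisely to $(x'+y)' + x = x$.

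Clause (iii) is a direct translation of axiom (b) through the definition of the induced relation $R$: since $0+x \approx x$ we have $(0,x)\in R$, and since $x+1 \approx 1$ we have $(x,1)\in R$. Clause (iv) then follows from (ii) by computation: assuming $(x,y)\in R$, the definition of $R$ gives $x+y = y$, hence $y' = (x+y)'$, and substituting into the identity proved in (ii) yields $y' + x' = (x+y)' + x' = x'$, which is exactly $(y',x')\in R$. No further machinery should be needed beyond axioms (a), (b), (c), (e) and the definition of the induced relation.
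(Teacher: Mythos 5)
Your proposal is correct and follows essentially the same route as the paper: the same substitution $z\mapsto x'$, $x\mapsto y$ in axiom (e) for clause (ii), with (c) and (a) collapsing the inner term to $1$ and (b) stripping the resulting $0$, and the same derivations of (i), (iii), and (iv) from (ii). No gaps.
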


\begin{lemma}\label{aritmetica2}
Let $ \mathbf{D}=\langle D, +, ', 1\rangle $ be a non-trivial orthogroupoid, then the following properties hold: 
\begin{itemize}
\item[1)] $ x+x = x $, for any $ x\in D $; 
\item[2)] $ x\neq x' $ for any $ x\in D $.
\end{itemize}
\proof
1) By axiom (f), $ x+(y+x)=y+x $. Setting $ y=0 $, $ x+x=x+(0+x)= 0+x=x $. \\
2) Suppose by contradiction that $ a=a' $ for some $ a\in D $. Then, by 1), $ a+a=a $ and also $ a'+a= a+a =a $. Then, by (d), $ a=1 $, hence $ 0=1'=1 $. By (b) $ 0+c=c $, for any $ c\in D $, and $ 0'+c = 1+c = 0 +c = c $, thus $ c = 1 $ by (d). So, if $ a=a' $ then $\algd$ is trivial, against the assumption. 
\endproof
\end{lemma}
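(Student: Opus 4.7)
The plan is to obtain both items as short, direct consequences of the axioms of Definition \ref{gruppoide ortogonale}, combining the absorption law (f), the unit laws in (b), and the quasi-equation (d).

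For (1), the second equation of (f), $y+(x+y)\approx x+y$, expresses a left-absorption of $y$ into $x+y$. Swapping the roles of $x$ and $y$ in this identity yields $x+(y+x)=y+x$, and specialising $y:=0$ rewrites the inner term $0+x$ to $x$ via (b); a second application of (b) then simplifies the right-hand side, and in two rewriting steps we obtain $x+x=x$.

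For (2), I would argue by contradiction. Suppose $a=a'$ for some $a\in D$. By item (1), $a+a=a$, and substituting $a=a'$ on the left of one summand also gives $a'+a=a$. Both premises of the quasi-identity (d) are then met with $x:=a$ and $z:=a$, so $a=1$; hence $a'=1'=0$, which collapses $1$ to $0$. It remains to show that $0=1$ forces $\mathbf{D}$ to be trivial: for an arbitrary $c\in D$, axiom (b) gives $0+c=c$, and because $0=1$ one also has $0'+c=1+c=0+c=c$. Applying (d) with $x:=0$ and $z:=c$ yields $c=1$, so $D=\{1\}$, contradicting non-triviality.

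I do not anticipate any real obstacle. The only point that requires attention is that (d) is a quasi-equation rather than an equation, so in each invocation I must check both premises $x+z\approx z$ and $x'+z\approx z$ before concluding $z=1$; the remaining manipulations are purely routine rewriting under (b) and (f).
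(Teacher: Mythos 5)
Your proposal is correct and follows essentially the same route as the paper: item (1) from the second absorption identity in (f) with $y:=0$ and the unit law (b), and item (2) by the same double application of the quasi-identity (d), first to force $a=1$ (hence $0=1$) and then to collapse every $c$ to $1$. No gaps; your explicit check of both premises of (d) at each invocation is exactly the care the argument requires.
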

Although in Definition \ref{gruppoide ortogonale} orthogroupoids have a quasi-equational presentation (Condition (d)), we can prove that the same notion can be captured by a single equation, as the following proposition shows:
\begin{proposition}\label{prop: OG is a variety}
A structure $ \mathbf{D}=\langle D, +, ', 1\rangle $ of type $ (2,1,0) $ that satisfies equations (a), (b), (c), (e) and (f) in Definition \ref{gruppoide ortogonale} satisfies condition (d) if and only if it satisfies
\begin{equation}\label{eq:1comm}
  1 +x\approx 1.
\end{equation}

\proof
We first derive $ 1+x\approx 1 $, assuming (d). $ 1+(1+x)=1+x $ by axiom (f), and $ 0+ (1+x) = 1+x $ by (b), hence $ 1+x= 1 $ for (d), as desired. \\
For the converse, suppose $ 1 + x \approx 1 $ holds and assume, for $a,b\in D$, that $ a + b=a'+b= b $. First observe that by Lemma \ref{aritmetica1}-(ii), $ (a' + b)'+ a=a $, so $ b' + a=a $. Similarly $ (a+b)' + a' = a' $, hence $ b' + a' = a' $. Now, substituting $ z $ by $ b' $, $ y$ by $a $ and $ x$ by $a'  $ in (e), we obtain $ (((b'+a)'+(b'+a'))'+ (b'+a)') + b = b  $. As $ b' + a = a $ and $ b' + a'= a' $, we get $ b=((a' + a')'+ a')+b = (a''+a')+b = (a+a')+b= 1+b=1 $ as desired. 
\endproof  
\end{proposition}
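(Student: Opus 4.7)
The plan is to prove the two implications separately: the forward direction is a direct application of the quasi-equation (d), while the converse hinges on finding the correct substitution into axiom (e).

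For the direction $(d) \Rightarrow (1+x \approx 1)$, the natural strategy is to exhibit a term $z$ that satisfies the hypothesis of (d) with the ``variable'' instantiated by $1$, since then the second premise $1' + z = z$ collapses to $0 + z = z$, which is free by axiom (b). Taking $z := 1 + x$, the condition $0 + z = z$ is immediate, and $1 + z = z$ is the special case $1 + (1 + x) = 1 + x$ of axiom (f). Therefore (d) forces $z = 1$, i.e., $1 + x = 1$.

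For the converse, assume the new equation $1 + x \approx 1$ and suppose $a + b = b$ and $a' + b = b$; I want to conclude $b = 1$. The first step is to apply Lemma~\ref{aritmetica1}(ii), namely $(x' + y)' + x = x$, twice: applied directly to $a' + b$ it yields $b' + a = a$, and applied with $x \mapsto a'$ (using (a) so that $a'' = a$) to $a + b$ it yields $b' + a' = a'$. Next, I would substitute $z := b'$, $y := a$, $x := a'$ in axiom (e); the two inner sums collapse, via the previous step, to $a$ and $a'$, and the whole left-hand side reduces to $((a' + a')' + a') + b$. Using Lemma~\ref{aritmetica2}(1) (which relies only on (b) and (f), so it is available here) we have $a' + a' = a'$, hence $(a' + a')' = a$ by (a); axiom (c) then turns the expression into $1 + b$, which equals $1$ by hypothesis. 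Therefore $b = 1$.

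The main obstacle is identifying the substitution into (e) that makes the two hypotheses $a + b = b$ and $a' + b = b$ interact productively; once the substitution $z = b'$, $y = a$, $x = a'$ is tried, the hypothesis-driven simplifications $b' + a = a$ and $b' + a' = a'$ make the entire inner structure of (e) collapse to an expression in $a + a'$, and axiom (c) together with $1 + x \approx 1$ closes the argument. Everything else is routine term manipulation using (a), (b), and (f).
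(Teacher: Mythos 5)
Your proposal is correct and follows essentially the same route as the paper's own proof: the forward direction instantiates (d) at $z = 1+x$ using (f) and (b), and the converse uses Lemma \ref{aritmetica1}(ii) to get $b'+a=a$ and $b'+a'=a'$, then the substitution $z:=b'$, $y:=a$, $x:=a'$ in (e) to collapse everything to $(a+a')+b = 1+b = 1$. Your remark that idempotency (Lemma \ref{aritmetica2}(1)) only needs (b) and (f) is a small but welcome extra precision over the paper's write-up.
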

\begin{corollary}
The class of orthogroupoids forms a variety axiomatized by equations (a), (b), (c), (e) and (f) in Definition \ref{gruppoide ortogonale} and \eqref{eq:1comm}.
\end{corollary}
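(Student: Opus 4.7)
The corollary is essentially a direct consequence of Proposition \ref{prop: OG is a variety}, so the plan is very short. The class of orthogroupoids is originally defined by conditions (a)--(f) of Definition \ref{gruppoide ortogonale}; the only non-equational axiom in this list is (d), which is a quasi-equation. All the remaining conditions (a), (b), (c), (e), (f) are already equations.

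The plan is to invoke the just-proved proposition to swap the quasi-equation (d) for the equation $1+x \approx 1$. More precisely, I would argue that in the presence of (a), (b), (c), (e), (f), the proposition guarantees that an algebra satisfies (d) if and only if it satisfies $1+x \approx 1$. Hence the axiom set
\[
\{(a),(b),(c),(e),(f),\ 1+x\approx 1\}
\]
defines exactly the same class of algebras as the original axiom set $\{(a),(b),(c),(d),(e),(f)\}$, namely the orthogroupoids.

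Since this new axiom set consists entirely of equations, the class of orthogroupoids is equationally definable, and therefore, by Birkhoff's theorem (or simply by the definition of ``variety'' as an equationally axiomatized class), it is a variety. I do not foresee any obstacle: the whole content lies in Proposition \ref{prop: OG is a variety}, and the corollary is purely a bookkeeping observation about the form of the axioms.
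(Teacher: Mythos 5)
Your proposal is correct and is exactly the argument the paper intends: the corollary is stated without proof because it follows immediately from Proposition \ref{prop: OG is a variety}, which lets one replace the quasi-identity (d) by the identity $1+x\approx 1$ in the presence of the remaining equational axioms. Nothing is missing.
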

Let $ \mathbf{D}$ be an orthogroupoid. We now show that the relational system obtained from $\mathbf D$ is an orthogonal relational system.
\begin{theorem}\label{sistema indotto}
Let $ \mathbf{D}=\langle D, +, ', 1\rangle $ be an orthogroupoid and $ R $ the induced relation. Then the induced relational system $ \mathbf{A}(D)= \langle D,R,',1\rangle $ is orthogonal and $ R $ is reflexive. 
\proof
By Definition \ref{gruppoide ortogonale}-(a), and Lemma \ref{aritmetica1}-(iv) the mapping $ x\mapsto x' $ is an involution on $ \mathbf{A}(D) $. By Lemma \ref{aritmetica1}-(iii), for all $x$, $ (x,1)\in R $ thus $ \mathbf{A}(D)= \langle D,R,',1\rangle $ is a relational system with 1 and involution. \\
Since  Lemma \ref{aritmetica2}, $x+x=x$, i.e. $R$ is reflexive.\\
To prove that $ \mathbf{A}(D) $ is orthogonal, we verify that conditions (a) and (b) in Definition \ref{sistema ortogonale} are satisfied. \\
By Definition \ref{gruppoide ortogonale}-(c), $ x + x'=1 $ for each $ x\in D $. Obviously $ 1\in U_{R}(x,x') $. Assume $ z\in U_{R}(x,x') $. Then, by definition, $ (x,z)\in R $ and $ (x',z)\in R $ and hence $ x+z=z $ and $ x'+z= z $. Then, axiom (d)  implies $ z=1 $, proving that $ U_{R}(x,x')=\{1\} $. \\
We now prove (b) of Definition \ref{sistema ortogonale}. Assume $ x\neq 0\neq y $ and $ x\perp y $. Then $ (x,y')\in R $ and $ (y,x')\in R $. The following three cases may arise: \\
(i) if $ (x,y)\in R $ then $ (y',x')\in R $ by (iv) of Lemma \ref{aritmetica1}, hence $y+ x'=y'+x'=x'$. Then, by axiom (d), $ x'= 1 $ and $ x=0 $, a contradiction. So this case is impossible.\\
(ii) if $ (x,y)\not\in R $ but $ (y,x)\in R $, then similarly $ y'\in U_{R}(x,x')=\{ 1\} $, whence $ y=0 $, which is again a contradiction. \\
(iii) the last possibility is that $ (x,y)\not\in R $ and $ (y,x)\not\in R $. By axiom (f), $ x+y\in U_{R}(x,y) $. Assume $ z\in U_{R}(x,y) $ with $ z\neq x+y $. Replacing $ x,y,z $ by $ x',y', z' $ in  axiom (e), respectively, we obtain 
\begin{equation}\label{(*)}
  (((z'+y')'+(z'+x'))'+(z'+y'),z)\in R.
\end{equation}
Since $x\perp y$, $ (y,x')\in R $, and so 
\begin{equation}\label{(**)}
y + x' = x'.
\end{equation} 
Moreover, $ z\in U_{R}(x,y) $ yields $ (x,z)\in R $ and $ (y,z)\in R $ thus also $ (z',x')\in R $ and $ (z',y')\in R $, which imply 
\begin{equation}\label{(***)}
z'+x' = x'\text{ and } z'+y'=y'. 
\end{equation}   
Using equations \eqref{(**)} and \eqref{(***)}, we obtain $ x+y= x''+y=(y+x')'+y=((z'+y')'+(z'+x'))'+(z'+y')' $, thus, from equation \ref{(*)}, we conclude $ (x+y,z)\in R $. This proves that $ x+y $ is a supremal element for $ x,y $ and hence $ \mathbf{A}(D) $ is an orthogonal relational system.  
\endproof
\end{theorem}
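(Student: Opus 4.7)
My plan is to verify, in turn, that $\mathbf{A}(D)$ is a relational system with $1$ and involution, that $R$ is reflexive, and then that conditions (a) and (b) of Definition \ref{sistema ortogonale} hold. The structural part should be routine: axiom (a) of Definition \ref{gruppoide ortogonale} gives $x''\approx x$, Lemma \ref{aritmetica1}(iv) yields the required antitonicity of $'$ with respect to $R$, and Lemma \ref{aritmetica1}(iii) gives $(x,1)\in R$ for every $x$. Reflexivity is immediate from Lemma \ref{aritmetica2}(1), since $x+x=x$ translates to $(x,x)\in R$ under the induced relation.

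For condition (a), I would show $U_R(x,x')=\{1\}$ directly. Axiom (c) states $x+x'=1$, so $(x,1)\in R$, and the analogous fact with $x'$ in place of $x$ (using $x''=x$) places $1$ in the upper cone. Conversely, if $z\in U_R(x,x')$, then $x+z=z$ and $x'+z=z$, at which point condition (d) of Definition \ref{gruppoide ortogonale} (or, equivalently, the identity $1+x\approx 1$ established in Proposition \ref{prop: OG is a variety}) forces $z=1$.

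Condition (b) is the substantive part. Assuming $x\perp y$ with $x\neq 0\neq y$, I would split on whether $(x,y)\in R$, $(y,x)\in R$, or neither holds. The first two cases I expect to close by a short contradiction: if $(x,y)\in R$, then Lemma \ref{aritmetica1}(iv) gives $(y',x')\in R$, which combined with $(y,x')\in R$ (the unpacking of $x\perp y$) places $x'$ in $U_R(y,y')=\{1\}$, forcing $x=0$; the symmetric argument rules out $(y,x)\in R$. In the remaining case I would claim $x+y$ itself is a supremal element for $x,y$, and that $x+y$ lies in $U_R(x,y)$ follows immediately from axiom (f).

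The main obstacle, and the place where the rather opaque axiom (e) finally earns its keep, is showing that every other $z\in U_R(x,y)$ satisfies $(x+y,z)\in R$. My plan is to substitute $x',y',z'$ for $x,y,z$ in axiom (e) and then simplify the resulting expression step by step. The three equalities $z'+x'=x'$, $z'+y'=y'$ (obtained from $(x,z),(y,z)\in R$ via Lemma \ref{aritmetica1}(iv)) together with $y+x'=x'$ (from $x\perp y$) should, applied in the right sequence, collapse the cumbersome left-hand side of (e) to precisely $(x+y)+z$, while the right-hand side is $z''=z$. This yields $(x+y)+z=z$, i.e.\ $(x+y,z)\in R$, and completes the verification that $x+y$ is a supremal element, hence that $\mathbf{A}(D)$ is orthogonal.
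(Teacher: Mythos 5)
Your proposal is correct and follows essentially the same route as the paper's proof: the same reduction to conditions (a) and (b) of Definition \ref{sistema ortogonale}, the same contradiction arguments via $U_R(y,y')=\{1\}$ in the first two subcases, and the same substitution $x\mapsto x'$, $y\mapsto y'$, $z\mapsto z'$ in axiom (e) combined with $z'+x'=x'$, $z'+y'=y'$ and $y+x'=x'$ to collapse the left-hand side to $(x+y)+z$. One negligible quibble: $1\in U_R(x,x')$ follows from $x+1=1$ (axiom (b), equivalently Lemma \ref{aritmetica1}(iii)), not from axiom (c), since $(x,1)\in R$ means $x+1=1$ rather than $x+x'=1$.
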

Let us remark that a relational system is univocally associated to an orthogroupoid, since the relation $R$ is uniquely determined by the groupoidal operation. \\
A converse of Theorem \ref{sistema indotto} showing how to construct an orthogroupoid out of an orthogonal relational system requires some more lemmas.
\begin{lemma}\label{Lemma 3}
Let $ \mathbf{A}=\langle{A,R}\rangle $ be a relational system and let $R$ be reflexive. Then the following equations
\begin{equation}\label{eq:plus}
x+(x+y)\approx x+y\approx y+(x+y) 
\end{equation}
hold in any induced groupoid. 
\proof
Three cases are possible:

\noindent (i) If $ (x,y)\in R $ then $ x+y=y $. Since $R$ is reflexive, also $ (y,y)\in R $, thus $ y\in U_{R}(x,y) $, i.e. $ x+y\in U_{R}(x,y) $ whence $ x+(x+y)=x+y=y+(x+y) $. \\
(ii) If $ (x,y)\not\in R $ but $ (y,x)\in R $ then $ x+y= x $. Using reflexivity of $R$, $ (x,x)\in R $ and hence $ x+y=x\in U_{R}(x,y) $, thus $ x+(x+y)=x+y=y+(x+y) $. \\
(iii) If $ (x,y)\not\in R $ and $ (y,x)\not\in R $ then, by definition, $ x+y$ is arbitrarily chosen in $U_{R}(x,y)$. Hence $ x+(x+y)=x+y=y+(x+y) $. 
\endproof
\end{lemma}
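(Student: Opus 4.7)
The plan is to reduce the identities directly to the upper-cone observation already packaged in Lemma \ref{x+y in U}. Since $R$ is reflexive, that lemma tells us that the value $x+y$ computed in any induced groupoid belongs to $U_R(x,y)$, so both $(x,x+y)\in R$ and $(y,x+y)\in R$. But clause (i) in Definition \ref{gruppoide indotto} says that whenever $(u,v)\in R$ we have $u+v=v$; feeding in $u=x$, $v=x+y$ and then $u=y$, $v=x+y$ yields exactly the two equalities sought.

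If one prefers to avoid appealing to Lemma \ref{x+y in U} and run the verification from scratch, the natural alternative is to split on the three clauses (i)--(iii) that define $+$ on the pair $(x,y)$. In cases (i) and (ii) one of the values $x$ or $y$ equals $x+y$, and reflexivity gives the missing pair $(x+y,x+y)\in R$, so another application of clause (i) discharges both equations. In case (iii), $x+y$ is by fiat chosen in $U_R(x,y)$, so again both $(x,x+y)$ and $(y,x+y)$ lie in $R$ and clause (i) of the definition finishes the job.

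There is really no main obstacle: the statement is essentially unpacking the definition of the induced groupoid in the reflexive setting. The only subtlety to keep in mind is that an induced groupoid is not uniquely determined when clause (iii) applies, but the argument above holds for \emph{every} choice of representative in $U_R(x,y)$, so the equations hold in all induced groupoids uniformly, as claimed.
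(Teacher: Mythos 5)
Your proof is correct and takes essentially the same route as the paper's: the paper runs the three-case analysis on the clauses of Definition \ref{gruppoide indotto} to place $x+y$ in $U_R(x,y)$ and then applies clause (i), which is exactly your second paragraph. Your first paragraph simply observes that the case analysis is already packaged in Lemma \ref{x+y in U} and cites it instead of repeating it --- a harmless, slightly cleaner shortcut rather than a genuinely different argument.
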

\begin{lemma}\label{Lemma 4}
Let $ \mathbf{A}=\langle A,R,',1\rangle $ be an orthogonal relational system with $R$ a reflexive relation. If $x,y\in A$, $ x\perp y $ and $ x\neq 0\neq y $, then $ (x,y)\not\in R $ and $ (y,x)\not\in R $.
\proof
Assume $ x\perp y $ and $ x\neq 0\neq y $. Then $ (x,y')\in R $ and $ (y,x')\in R $. Three cases are possible: \\
(i) if $ (x,y)\in R $ then $ (y',x')\in R $ and hence $ x'\in U_{R}(y,y') $. Therefore, $ x'=1 $, i.e. $ x= 0$, a contradiction; \\
(ii) if $ (x,y)\not\in R $ and $ (y,x)\in R $ then $ (x',y')\in R $ and hence $ y'\in U_{R}(x,x') $, whence $ y=0 $, again a contradiction. \\
(iii) The case in which $ (x,y)\in R $ and $ (y,x)\in R $ is ruled out by the previous two. 

Hence the only admissible case is $ (x,y)\not\in R $ and $ (y,x)\not\in R $.
\endproof
\end{lemma}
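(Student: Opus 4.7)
The plan is to argue by contradiction: assume that at least one of $(x,y)\in R$ or $(y,x)\in R$ holds, and deduce in each scenario that either $x=0$ or $y=0$, contradicting the hypothesis. There are three exhaustive cases to rule out, and the engine driving all of them is the same: the antitonicity of the involution with respect to $R$ (a defining property of a relational system with involution), combined with Definition \ref{sistema ortogonale}(a), which forces $U_R(a,a')=\{1\}$ for every $a$.

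First I would address the case $(x,y)\in R$. Antitonicity gives $(y',x')\in R$, while the orthogonality hypothesis $x\perp y$ provides $(y,x')\in R$. These two together place $x'$ in $U_R(y,y')$, which by Definition \ref{sistema ortogonale}(a) is the singleton $\{1\}$. Hence $x'=1$, so $x=0$, contradicting $x\neq 0$. The case $(x,y)\not\in R$ but $(y,x)\in R$ is entirely symmetric: antitonicity yields $(x',y')\in R$, and combined with $(x,y')\in R$ from $x\perp y$, we obtain $y'\in U_R(x,x')=\{1\}$, forcing $y=0$, again a contradiction.

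The third potential configuration, in which both $(x,y)\in R$ and $(y,x)\in R$ hold, is automatically excluded by the first case (which only used $(x,y)\in R$). After eliminating all three alternatives, the only survivor is $(x,y)\notin R$ and $(y,x)\notin R$, which is precisely the conclusion.

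I do not expect any serious obstacle: the lemma is a direct exploitation of the characterization $U_R(a,a')=\{1\}$ together with the antitone behaviour of $\,'\,$ on $R$. The only mild care needed is to recall that $x\perp y$ unpacks as \emph{both} $(x,y')\in R$ and $(y,x')\in R$, so that each case has the right pair of $R$-memberships available to form a set $U_R(a,a')$ containing the suspicious element $x'$ or $y'$. Reflexivity of $R$, interestingly, is not actually needed for the argument—only the involution axioms and Definition \ref{sistema ortogonale}(a) are invoked.
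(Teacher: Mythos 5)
Your proposal is correct and follows essentially the same case analysis as the paper's own proof, using antitonicity of the involution together with $U_R(a,a')=\{1\}$ to force $x=0$ or $y=0$ in the offending cases. Your side observation that reflexivity is not actually used is also accurate.
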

\begin{lemma}\label{lem:0-com}
Let $ \mathbf{A}=\langle A,R,',1\rangle $ be an orthogonal relational system and $ \mathbf{D}=\langle D, +, ', 1\rangle $ be an induced groupoid. Then $\mathbf D$ satisfies 
\begin{equation}\label{eq:x+0=0}
x+0\approx x. 
\end{equation}
\end{lemma}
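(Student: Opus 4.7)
The plan is to unfold the three clauses of Definition~\ref{gruppoide indotto} with $y=0$ and show that the only clause leading to $x+0\neq x$ actually forces $x=0$, so that in every case $x+0=x$.

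First I would observe that, since $(x,1)\in R$ for every $x\in A$, applying the involution property of $R$ yields $(0,x)\in R$ for every $x\in A$ (this is explicitly recorded at the beginning of \S 2). Consequently, in the computation of $x+0$, clause (iii) of Definition~\ref{gruppoide indotto} is never triggered. If $(x,0)\notin R$ (with $(0,x)\in R$) then clause (ii) gives $x+0=x$ directly, so this case is immediate.

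The substantive case is clause (i): suppose $(x,0)\in R$, so that $x+0=0$; then I would argue that necessarily $x=0$, which makes $x+0=0=x$. To this end, apply the involution condition on $R$ to $(x,0)\in R$ to obtain $(0',x')=(1,x')\in R$. Combined with $(0,x')\in R$, this puts $x'\in U_R(1,0)=U_R(1,1')$. But condition (a) of Definition~\ref{sistema ortogonale}, applied with the element $1$, states $U_R(1,1')=\{1\}$. Hence $x'=1$, i.e.\ $x=0$, and the computation closes.

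The main (and essentially only) obstacle is precisely the case $(x,0)\in R$: one might be tempted to invoke Lemma~\ref{Lemma 4}, but that lemma assumes reflexivity of $R$, which is not part of the definition of an orthogonal relational system. The trick is therefore to bypass reflexivity and instead exploit the uniqueness statement $U_R(x,x')=\{1\}$ from axiom (a), specialised to $x=1$, together with the automatic fact $(0,x')\in R$. Everything else in the proof is a bookkeeping check of the three clauses in Definition~\ref{gruppoide indotto}.
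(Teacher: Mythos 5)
Your proof is correct and follows essentially the same route as the paper's: both reduce everything to the case $(x,0)\in R$, apply the involution to get $(1,x')\in R$, and combine this with $(0,x')\in R$ to place $x'$ in $U_R(1,1')=\{1\}$. The paper phrases this step as a contradiction with $x\neq 0$ whereas you conclude $x=0$ directly and close the case via clause (i), but the argument is the same.
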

\begin{proof}
By definition, for any $a\in D$, $(0,a)\in R$. Suppose that $(a,0)\in R$ and $a\not=0$. Then, $(1,a')\in R$. Since $(0,a')\in R$, we get that $\{a',1\} \subseteq U_R(0,1) $, which is a contradiction. Therefore $(a,0)\notin R$, and thus, by Definition \ref{gruppoide indotto}-(ii), $a+0=a$.
\end{proof}
\begin{remark}{\rm
Let us notice that in general an orthogroupoid may falsify equation \eqref{eq:x+0=0}, as the orthogroupoid defined by the following table shows ($a+0=b$). }
\begin{center}
\begin{tabular}{|c|cccccc|}
\hline
$\mathbf +$&  $\mathbf 0$ &  $\mathbf 1$ &  $\mathbf a$ &  $\mathbf a'$ &  $\mathbf b$ &  $\mathbf b'$\\
\hline
     $\mathbf 0$ &  $0$ &  $1$ &  $a$ &  $a'$ &  $b$ &  $b'$ \\
     $\mathbf 1$ &  $1$ &  $1$ &  $1$ &  $1$ &  $1$ &  $1$ \\
     $\mathbf a$ &  $b$ &  $1$ &  $a$ &  $1$ &  $b$ &  $1$ \\
     $\mathbf a'$ &  $a'$ &  $1$ &  $1$ &  $a'$ &  $1$ &  $b'$ \\
     $\mathbf b$ &  $a$ &  $1$ &  $a$ &  $1$ &  $b$ &  $1$ \\
     $\mathbf b'$ &  $a'$ &  $1$ &  $1$ &  $a'$ &  $1$ &  $b'$\\
\hline
\end{tabular} 
\end{center}
\end{remark}

We can now prove a converse of Theorem \ref{sistema indotto} for orthogonal relational systems whose relation is both reflexive and transitive. 

\begin{theorem}\label{Teorema 2}
Let $ \mathbf{A}=\langle A,R,',1\rangle $ be an orthogonal relational system with a reflexive and transitive relation R. Then any groupoid $ \mathbf{G}(A)=\langle A,+,',1\rangle $ induced by $\alga$ is orthogonal.
\end{theorem}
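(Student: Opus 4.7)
The plan is to verify that the induced groupoid $\mathbf{G}(A)$ satisfies each of the axioms (a)--(f) of Definition \ref{gruppoide ortogonale}. Most of them fall out quickly from the basic machinery already set up, so the real content lies in axiom (e). Axiom (a) is just the involution condition built into $\alga$. For (b), the facts $(0,x)\in R$ and $(x,1)\in R$ (for every $x$) combined with clause (i) of Definition \ref{gruppoide indotto} give $0+x=x$ and $x+1=1$. For (c), I would split into cases: when $x\in\{0,1\}$ use (b) together with Lemma \ref{lem:0-com}, and when $x\notin\{0,1\}$ note that by reflexivity $(x,x'')\in R$, i.e. $x\perp x'$ with both elements non-zero, so clause (iv) of Definition \ref{def: gruppoide indotto da un ORS} forces $x+x'$ to be a supremal element of $U_R(x,x')$, which by Definition \ref{sistema ortogonale}(a) equals $\{1\}$. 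For (d), the hypotheses $x+z=z$ and $x'+z=z$ mean $(x,z),(x',z)\in R$, so $z\in U_R(x,x')=\{1\}$. Finally, (f) is exactly Lemma \ref{Lemma 3}.

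The hard part is axiom (e), namely $(((z+y)'+(z+x))'+(z+y)')+z'\approx z'$. Set $a=z+y$, $b=z+x$, $c=a'+b$, $d=c'+a'$; the goal becomes $d+z'=z'$. The key observations I would collect, all relying on reflexivity (via Lemma \ref{x+y in U}) and the compatibility of $'$ with $R$, are:
\begin{enumerate}
\item[(1)] $(z,a),(z,b)\in R$, hence $(a',z'),(b',z')\in R$;
\item[(2)] $(b,c)\in R$ together with $(z,b)\in R$ and transitivity of $R$ give $(z,c)\in R$, hence $(c',z')\in R$;
\item[(3)] $(a',c)\in R$ yields $(c',a)\in R$, which is precisely the statement $c'\perp a'$;
\item[(4)] combining (1) and (2), $z'\in U_R(c',a')$.
\end{enumerate}

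With these in hand, the final step is to compute $d=c'+a'$. If $c'\neq 0$ and $a'\neq 0$, then (3) and clause (iv) of Definition \ref{def: gruppoide indotto da un ORS} guarantee that $d$ is a supremal element of $U_R(c',a')$; since $z'$ also lies in that upper cone by (4), either $d=z'$ or $(d,z')\in R$, and in both cases $d+z'=z'$ (in the first case using reflexivity to get $z'+z'=z'$). The degenerate cases are routine: if $c'=0$ then $d=0+a'=a'$ and (1) closes it; if $a'=0$ then Lemma \ref{lem:0-com} gives $d=c'$ and (2) closes it.

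I expect the only subtle point to be recognising observation (3), since it is what upgrades the non-determinism in clause (iii) of Definition \ref{gruppoide indotto} into the controlled behaviour of clause (iv); without it, the arbitrary choice hidden in $c'+a'$ could in principle land outside the principal filter generated by $z'$, and the identity would fail. Everything else is bookkeeping with reflexivity, transitivity and the involution.
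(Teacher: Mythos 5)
Your proposal is correct and follows essentially the same route as the paper: the routine axioms are dispatched via Lemmas \ref{x+y in U}, \ref{Lemma 3} and \ref{lem:0-com}, and for axiom (e) you identify the same orthogonal pair (your $c'\perp a'$ is the paper's $a\perp b$), use transitivity to place $z'$ in $U_R(c',a')$, and conclude by supremality, with the same split into the degenerate cases $c'=0$ and $a'=0$. If anything, your treatment is marginally tidier: you handle the $c'=0$ subcase uniformly via $(a',z')\in R$ where the paper splits further on $y=0$ and $z=0$, and you explicitly cover the possibility $d=z'$, which the definition of supremal element does not address directly and the paper silently absorbs via reflexivity.
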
 
\proof
Consider an induced groupoid $ \mathbf{G}(A)=\langle A,+,',1\rangle $ as defined in Definition \ref{def: gruppoide indotto da un ORS}.
We check that $\mathbf{G}(A)$ is an orthogonal groupoid, i.e. it satisfies all the axioms presented in Definition \ref{gruppoide ortogonale}. \\
Axioms (a) and (b) are obviously satisfied. By Lemma \ref{Lemma 3}, $ \mathbf{G}(A) $ satisfies (f). 
Now assume $ x+z=z $ and $ x'+z= z $ for some $ x,z\in A $. Then $ (x,z)\in R $ and $ (x',z)\in R $, thus $ z\in U_{R}(x,x')=\{ 1\} $, i.e. $ z=1 $, proving the quasi-identity (d).
It remains to show that (c) and (e) hold true. We first prove (e). 
Let $ x,y,z\in A $ and set $ b=(z+y)' $, $ a=((z+y)'+(z+x))' $. By Lemma \ref{Lemma 3} we have $ b+a'=a' $, therefore $ (b,a')\in R $ by Lemma \ref{x+y in U}, whence $ a\perp b $. Let us consider three different cases: \\
Case 1: $ a=0 $, then $ ((z+y)'+(z+x))'+(z+y)')+z'=(z+y)'+z' $. Now, if $ y=0 $ then $ (z+y)'+z'= (z+0)'+z'=z'+z'$, by equation \eqref{eq:x+0=0}, and (e) holds. \\
If $ z=0 $ then $ (z+y)'+z'= y' +1 = 1= z' $, proving (e).\\
If $ z\neq 0\neq y $ then, by reflexivity and Lemma \ref{Lemma 3}, we have $ (z,z+y)\in R $ thus also $ ((z+y)',z')\in R $ and hence $ (z+y)'+z'=z' $, as desired. \\
Case 2: $ b=0 $, then $ ((z+y)'+(z+x))'+(z+y)')+z'=(((0+(z+x))'+0)+z'=(z+x)'+z'=z' $, since by Lemma \ref{x+y in U}, $ (z, z+x)\in R $, and, by definition of the map $()'$, $ ((z+x)', z')\in R $. \\
Case 3: $ a\neq 0\neq b $ and $ a\perp b $. Since Lemma \ref{Lemma 4}, there is a supremal element $ w $ for $ a,b $ in $ U_{R}(a,b) $ and $ w=a+b $. Since $R$ is reflexive, also $ (z,z+y)\in R $ by Lemma \ref{Lemma 3}. However, $ b'=z+y $ thus $ (b,z')\in R $. Since $ a'=(z+y)'+(z+x) $, also $ (z+x,a')\in R $. By Lemma \ref{Lemma 3} $ (z,z+x)\in R $ and, since $R$ is transitive we can conclude $ (z,a')\in R $ and also $ (a,z')\in R $. Altogether we have shown that $ z'\in U_{R}(a,b) $. Since $ a+b $ is a supremal element for $ a,b $, this yields $ (a+b,z')\in R $. Consequently, $ (a+b)+z'=z' $, proving (e). \\
Finally, we show axiom (c). If $ x=0 $ then $ x'=1 $ and hence $ x+x'=0+1=1 $. Similarly for $ x=1 $. If $ x\neq 0 $ and $ x\neq 1 $ then, since $R$ is reflexive, $ x+x'\in U_{R}(x,x')=\{ 1\} $, hence $ x+x'= 1 $  
\endproof
Let us remark that reflexivity and transitivity are necessary conditions to obtain, from Definition \ref{def: gruppoide indotto da un ORS}, an orthogroupoid out of an orthogonal relational system.

\begin{example}\label{example 1}\rm
\textrm{Let $ A=\{0,a,a',1\} $ and
\begin{equation*}
 R=\{ (a,a'), (a',a),(x,1), (0,x)\;\;\forall x\in A\}.
\end{equation*}
 It can be verified that $ \mathbf{A}=\langle A,R,',1\rangle $ is an orthogonal relational system. Indeed: $ U_{R}(0,0')=U_{R}(1,1')=\{1\} $; $ U_{R}(a,a')=\{1\} $ and $ U_{R}(a',a)=\{1\} $. Since $ (a,a')\in R $ we have $ a\perp a $. $ U_{R}(a,a)=U_{R}(a)=\{a',1\} $, thus $ a' $ is a supremal element in $ U_{R}(a,a) $. $U_{R}(a',a')=U_{R}(a')=\{a,1\} $, hence $ a $ is a supremal element in $ U_{R}(a',a') $. This shows $ \mathbf{A}=\langle A,R,',1\rangle $ is an orthogonal relational system: notice that $ R $ is neither reflexive nor transitive.}
 
 \begin{figure}[h]
\begin{tikzpicture}[scale=1.3]
	\draw [line width=1pt, <->](-0.9,0) -- (0.9,0);
	\draw (-1,0) node {$ \bullet $};
	\draw (1,0) node {$ \bullet $};
	\draw (-1.3,0) node {$ a $};
	\draw (1.3,0) node {$ a' $};
	\draw (0,1.5) node {$ \bullet $};
	\draw (0.2,1.75) node {$ 1 $};

	 \draw (0,-1.5) node {$ \bullet $};
	\draw (-0.2, -1.75) node {$ 0 $};
	
	\draw [line width=0.8pt, ->] (0,-1.5) -- (-0.95,-0.1);
	
	\draw [line width=0.8pt, ->] (0,-1.5) -- (0.95,-0.1);
	
	\draw [line width=0.8pt, ->] (-1,0) -- (-0.1,1.4);
	
	\draw [line width=0.8pt, ->] (1,0) -- (0.1,1.4);
	
	\draw [line width=0.9pt, ->] (0,-1.5) -- (0,1.35);

	\end{tikzpicture}

 \caption{The graph of the orthogonal relational system $\mathbf A$.}\label{Grafo: example 1}
\end{figure}
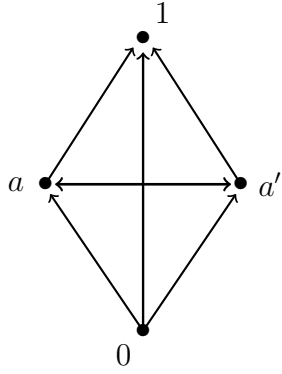

\textrm{An induced groupoid $ \mathbf{G}(A)=\langle A,+,',1 \rangle $ is defined as follows}
\begin{center}
\begin{tabular}{|c|c|c|c|c|}
\hline
+ & $\mathbf{0}$ & $\mathbf{a}$ &$ \mathbf{a'}$ & $\mathbf{1} $\\
\hline
$\mathbf{0}$ & $0$ & $a$ & $a'$ & $1 $\\
\hline
$\mathbf{a}$ & $a$ & $a'$ & $a'$ & $1$ \\
\hline
$\mathbf{a'}$ & $a'$ & $a$ & $a$ & $1$ \\
\hline
$\mathbf{1}$ & $1$ & $1$ & $1$ & $1$ \\
\hline
\end{tabular}
\end{center} 
\textrm{It can be seen that $ \mathbf{G}(A) $ is not an orthogroupoid, since $ a+a'=a'\neq 1 $, against Definition \ref{gruppoide ortogonale}-(c).} 
\end{example}

By Theorem \ref{sistema indotto}, if $\mathbf{G}$ is an orthogroupoid and $R_G$ the induced relation then $R_G$ is reflexive.
In order to prove a converse of this statement, in Theorem \ref{gruppoide indotto} we require, moreover, $R$ to be transitive. In this second example we show that transitivity is a necessary condition to obtain an orthogroupoid out of an orthogonal relational system. 
\begin{example}\label{Example 2}\rm
\textrm{Let $ B=\{0, a, b, a',b',c,c',1 \} $ and a binary relation 
\begin{equation*}
R=\{(a,b),(b,c),(b',a'),(c',b'),(a,c'),(c,a'),(0,x),(x,1),(x,x)\;\;\forall x\in B\}.
\end{equation*}
It can be easily checked that $U_R(a,a')=U_R(b,b')=U_R(c,c')=\{1\}$. The orthogonal pairs are: $a\perp c,c'\perp b, b'\perp a$ and $U_R(a,c)=U_R(c',b)=U_R(b',a)=\{1\}$. Therefore the structure $\mathbf B=\langle B, R,',1\rangle $ is an orthogonal relational system whose relation is reflexive but not transitive. By Definition \ref{def: gruppoide indotto da un ORS} we have that $a+b=b$, $a+c'=c'$ and $c+b=c$ since $(c,b)\not\in R$ but $(b,c)\in R$. Therefore in any groupoid induced by the system $\mathbf B $ axiom (e) in Definition \ref{gruppoide ortogonale} is falsified, indeed: $(((a+c')'+(a+b))'+(a+c')')+a'=((c''+b)'+c'')+a'=(c'+c)+a'=1+a'=1$ since $(1,a')\notin R$ and $(a',1)\in R$, but $a'\not=1$. 
}
 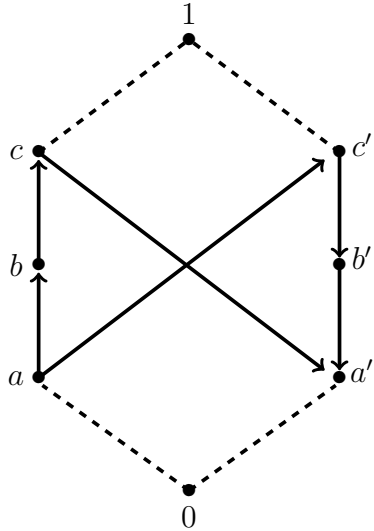
\begin{figure}[h]
\begin{tikzpicture}
	
	\draw (-2,0) node {$ \bullet $};
	\draw (2,0) node {$ \bullet $};
	\draw (-2.3,0) node {$ a $};
	\draw (2.3,0.1) node {$ a' $};
	
	\draw (-2,1.5) node {$\bullet $};
	\draw (-2.3, 1.5) node {$ b $};
	
	\draw (2,1.5) node {$\bullet $};
	\draw (2.3,1.6) node {$ b' $};
	
	\draw (0,4.5) node {$ \bullet $};
	\draw (0,4.85) node {$ 1 $};
	
	\draw (0,-1.5) node {$ \bullet $};
	\draw (0, -1.85) node {$ 0 $};
		
	\draw (-2,3) node {$\bullet$}; 
	\draw (-2.3,3) node {$ c $};
	
	\draw (2,3) node {$\bullet$};
	\draw (2.3,3.1) node {$c'$};
		
	\draw [line width=1.4pt, dashed] (0,-1.5) -- (-1.95,-0.1);	 
	
	\draw [line width=1.4pt, dashed] (0,-1.5) -- (1.95,-0.1);
	
	\draw [line width=1.4pt, ->] (-2,0) -- (-2,1.4);
	
	\draw [line width=1.4pt, <-] (2,0.1) -- (2,1.5);
	
	\draw [line width=1.4pt, ->] (-2,1.5) -- (-2,2.9);
	
	\draw [line width=1.4pt, <-] (2,1.6) -- (2,3);
	
	\draw [line width=1.4pt, dashed] (2,3) -- (0,4.5);
	
	\draw [line width=1.4pt, dashed] (-2,3) -- (0,4.5);
	
	\draw [line width=1.4pt, ->] (-2,0) -- (1.8,2.9);
	
	\draw [line width=1.4pt, ->] (-2,3) -- (1.8,0.1);

\end{tikzpicture}

 \caption{The graph representing the orthogonal relational system $\mathbf{B} $ (obvious arrows are omitted).}\label{Grafo: example 2}
\end{figure}

\end{example}

\section{Central elements and decomposition}

The aim of this section is to give a a characterization of the central elements of a variety of orthogroupoids. Contextually a direct decomposition theorem of this variety will follow. The section is based on the ideas developed in \cite{Sal} and \cite{Ledda13} on the general theory of \textit{Church algebras}. 

The notion of Church algebra is based on the simple observation that many well-known algebras, including Heyting algebras, rings with unit and combinatory algebras, possess a term operation $ q $, satisfying the equations: $ q(1,x,y)\approx x $ and $ q(0,x,y)\approx y $. The term operation $ q $ simulates the behaviour of the if-then-else connective and, surprisingly enough, this yields to strong algebraic properties. 

An algebra $\mathbf{A}$\ of type $\nu$\ is a \emph{Church algebra}\ if there
are term definable elements $0^{\mathbf{A}},1^{\mathbf{A}}\in A$\ and a term
operation $q^{\mathbf{A}}$\ s.t., for all $a,b\in A$, $q^{\mathbf{A}}\left(
1^{\mathbf{A}},a,b\right)  =a$\ and $q^{\mathbf{A}}\left(  0^{\mathbf{A}%
},a,b\right)  =b$. A variety $\mathcal{V}$\ of type $\nu$\ is a Church
variety\ if every member of $\mathcal{V}$\ is a Church algebra with respect to
the same term $q\left(  x,y,z\right)  $\ and the same constants $0,1$. 

Taking up an idea from D. Vaggione \cite{Vaggio}, we say that an element \textit{e} of a Church algebra \textbf{A} is \textit{central} if the congruences $ \theta(e,0),\theta (e,1)  $ form a pair of factor congruences on \textbf{A}. A central element $ e $ is nontrivial when $ e\not\in\{0,1\} $. We denote the set of central elements of \textbf{A} (the centre) by $\mathrm {Ce}({A})$.

Setting
$$ x\wedge y=q(x,y,0),\;\; x\vee y= q(x,1,y)\;\; x^{*}=q(x,0,1) $$
we can state the following general result for Church algebras:
\begin{theorem}\label{th: Boolean algebra of centrals}
\emph{\cite{Sal}} Let $ \mathbf{A} $ be a Church algebra. Then 
\[
\mathrm {Ce}(\mathbf{A})=\langle \mathrm {Ce}(A),\wedge,\vee,^{*},0,1\rangle
\]
is a Boolean algebra which is isomorphic to the Boolean algebra of factor congruences of $\mathbf{A}$.
\end{theorem}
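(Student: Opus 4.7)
The plan is to build the isomorphism between $\mathrm{Ce}(\mathbf A)$ and the algebra $\mathrm{FC}(\mathbf A)$ of factor congruences of $\mathbf A$, and then to transport the (known) Boolean structure of $\mathrm{FC}(\mathbf A)$ back through this isomorphism. First I would recall the general fact that, for any algebra, the set of factor congruences forms a Boolean sublattice of the congruence lattice under intersection and permutable composition, with the complement of $\theta$ being its factor partner $\theta^{*}$ (any pair $(\theta,\theta^{*})$ yielding $\mathbf A\cong\mathbf A/\theta\times\mathbf A/\theta^{*}$). This is the result I will pull back to $\mathrm{Ce}(\mathbf A)$.

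Next I would set up the correspondence $\Phi:\mathrm{Ce}(\mathbf A)\to\mathrm{FC}(\mathbf A)$ by $\Phi(e)=\theta(e,0)$, with inverse given by sending a factor pair $(\alpha,\beta)$ to the element of $A$ corresponding, under the isomorphism $\mathbf A\cong\mathbf A/\alpha\times\mathbf A/\beta$, to the pair $([0]_{\alpha},[1]_{\beta})$. To see this is well defined and bijective, I would prove the following key computational lemma: for every central $e$ and all $a,b\in A$, the factor isomorphism induced by $(\theta(e,0),\theta(e,1))$ sends $a$ to $([a]_{\theta(e,0)},[a]_{\theta(e,1)})$, and its inverse sends $([a],[b])$ to $q(e,a,b)$. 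This uses only the defining equations of $q$ together with the hypothesis that $\theta(e,0)\cap\theta(e,1)=\Delta$ and $\theta(e,0)\circ\theta(e,1)=\nabla$; the point is that $q(e,a,b)$ is $\theta(e,1)$-related to $q(1,a,b)=a$ and $\theta(e,0)$-related to $q(0,a,b)=b$, which uniquely identifies it.

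Once this is in hand, the match of operations is a direct calculation in the product representation. For instance, $e^{*}=q(e,0,1)$ corresponds to $([0]_{\theta(e,0)},[1]_{\theta(e,1)})$ swapped, so $\Phi(e^{*})=\theta(e,1)$, which is the Boolean complement of $\Phi(e)$. Similarly, evaluating $e\wedge f=q(e,f,0)$ and $e\vee f=q(e,1,f)$ in the factor representation given by $\theta(e,0)$ and $\theta(e,1)$ yields elements whose associated factor congruences are $\theta(e,0)\vee\theta(f,0)$ and $\theta(e,0)\cap\theta(f,0)$ respectively (up to the usual duality between $0$- and $1$-congruences). From these identifications, closure of $\mathrm{Ce}(A)$ under $\wedge,\vee,^{*}$, and the Boolean axioms, follow for free from the Boolean structure of $\mathrm{FC}(\mathbf A)$.

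The main obstacle is the computational lemma identifying $q(e,a,b)$ as the inverse image of $([a],[b])$ in the factor decomposition, since one has to verify both that $q(e,a,b)$ is well defined modulo the relevant factor congruences and that the assignment is a homomorphism for every basic operation of $\mathbf A$. The second point is subtle because it uses centrality of $e$ in an essential way: one needs $q(e,\cdot,\cdot)$ to commute with all basic operations of $\mathbf A$ applied coordinate-wise, and this is precisely what the hypothesis that $(\theta(e,0),\theta(e,1))$ is a factor pair supplies, via the fact that the projections onto the two factors are homomorphisms whose kernels are $\theta(e,0)$ and $\theta(e,1)$.
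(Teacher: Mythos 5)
The paper does not prove this statement; it imports it from \cite{Sal}, so the comparison below is with the argument given there. The mechanism at the heart of your proposal is the right one and is exactly the one used in \cite{Sal}: for central $e$ the map $a\mapsto([a]_{\theta(e,1)},[a]_{\theta(e,0)})$ is an isomorphism onto the product, and $q(e,a,b)$ is its inverse on $([a],[b])$, being $\theta(e,1)$-related to $q(1,a,b)=a$ and $\theta(e,0)$-related to $q(0,a,b)=b$; this sets up the bijection between central elements and complementary factor pairs. The genuine gap is your very first step. It is \emph{not} a general fact that the factor congruences of an arbitrary algebra form a Boolean sublattice of the congruence lattice: they need not be closed under join or meet, and the sublattice they generate need not be distributive. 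For the Klein four-group, for instance, the factor congruences are all of $\mathrm{Con}(\mathbf A)$, a copy of $M_{3}$, which is not Boolean. That the factor congruences of a \emph{Church} algebra do form a Boolean algebra (the property usually called having Boolean factor congruences) is precisely the substantive content of the theorem, so there is no Boolean structure on $\mathrm{FC}(\mathbf A)$ available to be pulled back to $\mathrm{Ce}(\mathbf A)$ at the point where you invoke it.

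The proof in \cite{Sal} therefore runs in the opposite direction. One first shows, using the equational characterization of central elements (reproduced here as Proposition \ref{prop: description of central elements in Church varieties}), that $\mathrm{Ce}(A)$ is closed under $\wedge$, $\vee$ and ${}^{*}$ --- i.e.\ that $q(e,f,0)$, $q(e,1,f)$ and $q(e,0,1)$ are again central, which is a hands-on verification and not ``for free'' --- and that these operations satisfy the Boolean algebra axioms. Only then is this structure transported through the bijection to conclude that the factor congruences form a Boolean sublattice of $\mathrm{Con}(\mathbf A)$. So the skeleton of your argument (the computational lemma about $q(e,a,b)$ and the resulting bijection) is sound and would survive, but the direction of transport must be reversed, and the closure and axiom verifications on the $\mathrm{Ce}(A)$ side, which your outline dismisses as automatic, are where the actual work lies.
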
 


If $\mathbf{A}$ is a Church algebra of type $\nu$ and $e\in A$ is a central
element, then we define $\mathbf{A}_{e}=(A_{e},g_{e})_{g\in\nu}$ to be the
$\nu$-algebra defined as follows:

\begin{equation}\label{eq:opAe}
A_{e}=\{e\wedge b:b\in A\};\quad g_{e}(e\wedge\overline{b})=e\wedge
g(e\wedge\overline{b}),
\end{equation}
where $ \overline{b} $ denotes the a n-tuple $ b_1,...,b_n $ and $e\wedge\overline{b} $ is an abbreviation for $ e\wedge b_1,...,e\wedge b_n $.

By \cite[Theorem 4]{Ledda13}, we have that:

\begin{theorem}\label{th: decomposizione Church algebras}
\label{relat}Let $\mathbf{A}$ be a Church algebra of type $\nu$ and $e$ be a
central element. Then we have:

\begin{enumerate}
\item For every $n$-ary $g\in\nu$ and every sequence of elements $\overline
{b}\in A^{n}$, $e\wedge g(\overline{b})=e\wedge g(e\wedge\overline{b})$, so
that the function $h:A\rightarrow A_{e}$, defined by $h(b)=e\wedge b $, is a
homomorphism from $\mathbf{A}$ onto $\mathbf{A}_{e}$.

\item $\mathbf{A}_{e}$ is isomorphic to $\mathbf{A}/\theta(e,1)$. It follows
that $\mathbf{A}=\mathbf{A}_{e}\times\mathbf{A}_{e^{\prime}}$ for every
central element $e$, as in the Boolean case.
\end{enumerate}
\end{theorem}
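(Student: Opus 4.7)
The plan is to exploit directly that $e$ is central, i.e., that $\theta(e,0)$ and $\theta(e,1)$ form a pair of complementary factor congruences: $\theta(e,0)\cap\theta(e,1)=\Delta_A$ and $\theta(e,0)\circ\theta(e,1)=\nabla_A$. The whole argument is driven by two simple congruence observations, valid for every $b\in A$, obtained by instantiating the if-then-else term $q$:
\begin{equation*}
e\wedge b \;=\; q(e,b,0)\;\equiv\; q(1,b,0)\;=\;b \pmod{\theta(e,1)},
\end{equation*}
\begin{equation*}
e\wedge b \;=\; q(e,b,0)\;\equiv\; q(0,b,0)\;=\;0 \pmod{\theta(e,0)}.
\end{equation*}

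For item (1), I would first use the two displayed relations to prove the key identity $e\wedge g(\overline{b}) = e\wedge g(e\wedge\overline{b})$. Compatibility of $\theta(e,1)$ with the term $g$ together with the first relation yields $g(\overline{b})\equiv g(e\wedge\overline{b}) \pmod{\theta(e,1)}$, and then, applying $e\wedge(\cdot)$, $e\wedge g(\overline{b})\equiv e\wedge g(e\wedge\overline{b}) \pmod{\theta(e,1)}$. The second relation, applied to the two sides directly, shows that both are $\equiv 0 \pmod{\theta(e,0)}$, so they are congruent modulo $\theta(e,0)$ as well. Since $\theta(e,0)\cap\theta(e,1)=\Delta_A$, equality follows. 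The identity is exactly what is needed so that, by the definition \eqref{eq:opAe} of $g_e$, the map $h(b)=e\wedge b$ satisfies $h(g(\overline{b})) = g_e(h(b_1),\dots,h(b_n))$; surjectivity is built into the definition of $A_e$.

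For item (2), I would show that the kernel of $h$ is precisely $\theta(e,1)$, which together with surjectivity gives $\mathbf{A}_e\cong\mathbf{A}/\theta(e,1)$ by the homomorphism theorem. If $(b,c)\in\theta(e,1)$, then by compatibility $e\wedge b\equiv e\wedge c \pmod{\theta(e,1)}$, while both are $\equiv 0 \pmod{\theta(e,0)}$, so $e\wedge b = e\wedge c$. Conversely, if $e\wedge b = e\wedge c$, collapsing $e$ to $1$ modulo $\theta(e,1)$ forces $b\equiv e\wedge b = e\wedge c \equiv c \pmod{\theta(e,1)}$. The symmetric argument, using that $e'$ is also central with the roles of $\theta(e,0)$ and $\theta(e,1)$ exchanged, gives $\mathbf{A}_{e'}\cong \mathbf{A}/\theta(e,0)$. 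Finally, the standard factor-congruence decomposition associated with the complementary pair $\theta(e,0),\theta(e,1)$ yields $\mathbf{A}\cong \mathbf{A}/\theta(e,1)\times\mathbf{A}/\theta(e,0)\cong\mathbf{A}_e\times\mathbf{A}_{e'}$.

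The main obstacle is the identity $e\wedge g(\overline{b})=e\wedge g(e\wedge\overline{b})$ of item (1): once this is established, the rest is routine factor-congruence bookkeeping. The identity, in turn, is genuinely equational only because centrality buys us $\theta(e,0)\cap\theta(e,1)=\Delta_A$ to upgrade two separate congruences into an actual equality; without it, one would only obtain the two congruences displayed above.
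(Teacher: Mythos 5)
Your argument is correct. Note that the paper itself gives no proof of this statement --- it is imported verbatim from \cite[Theorem 4]{Ledda13} --- so there is nothing internal to compare against; what you have written is essentially the standard proof from that reference. The two congruences $e\wedge b\equiv b \pmod{\theta(e,1)}$ and $e\wedge b\equiv 0\pmod{\theta(e,0)}$, combined with $\theta(e,0)\cap\theta(e,1)=\Delta_A$, do all the work, and your kernel computation for $h$ is exactly right. The only step you pass over quickly is the claim that $e'$ (i.e.\ $e^{*}=q(e,0,1)$) is central with $\theta(e^{*},0)=\theta(e,1)$ and $\theta(e^{*},1)=\theta(e,0)$; this is covered by Theorem \ref{th: Boolean algebra of centrals} (the centre is a Boolean algebra isomorphic to the Boolean algebra of factor congruences, with complementation swapping the pair), or can be checked directly from $q(e,0,1)\equiv 0\pmod{\theta(e,1)}$ and $q(e,0,1)\equiv 1\pmod{\theta(e,0)}$ together with the fact that a pair of complementary factor congruences containing another such pair componentwise must coincide with it.
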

\noindent
We call \emph{0-commutative} an orthogroupoid if it satisfies
\begin{equation}\label{eq:0-comm-centr}
x + 0\approx 0 +x.
\end{equation}

Let us remark that equation \eqref{eq:0-comm-centr} states a very natural property for orthogroupoids since, in Lemma \ref{lem:0-com}, we proved that any orthogroupoid induced by an orthogonal relational system fulfills this equation.

In the context of 0-commutative orthogroupoids, a new operation $ x\cdot y$ can be defined \emph{\`a la De Morgan} by $(x'+y')'$. 
Few basic properties of $\cdot$ are presented in the following:
\begin{lemma}\label{aritmetica 0-commutativa}
Any 0-commutative orthogroupoid satisfies: 
\begin{itemize}
\item[1)] $ x\cdot 0 \approx 0\cdot x \approx 0 $;
\item[2)] $ x\cdot 1 \approx 1\cdot x\approx x $.
\end{itemize}
\proof

1) $ x\cdot 0 = (x'+0')' = (x' + 1)'= (1+x')' = 1' = 0 $. \\
2) $ x\cdot 1 = (x' + 1')'= (x' + 0)' = (0+x')' = x''= x $.

\endproof
\end{lemma}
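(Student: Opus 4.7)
The plan is to unpack $x \cdot y$ as $(x' + y')'$, then reduce each of the four identities to known facts about $+$ already established in the excerpt: axiom (a) ($x'' \approx x$), axiom (b) ($0 + x \approx x$ and $x + 1 \approx 1$), the derived equation $1 + x \approx 1$ from Proposition \ref{prop: OG is a variety}, and the 0-commutativity hypothesis $x + 0 \approx 0 + x$.

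For part 1, I would compute
\[
x \cdot 0 = (x' + 0')' = (x' + 1)' = 1' = 0,
\]
using $0' = 1$ (Lemma \ref{aritmetica1}(i)) and axiom (b). Dually,
\[
0 \cdot x = (0' + x')' = (1 + x')' = 1' = 0,
\]
invoking equation \eqref{eq:1comm} from Proposition \ref{prop: OG is a variety}. Notice that 0-commutativity is not actually required here, only the two absorbing laws for $1$.

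For part 2, I would compute
\[
x \cdot 1 = (x' + 1')' = (x' + 0)'.
\]
Here is the one place where 0-commutativity is essential: the axioms of an orthogroupoid only guarantee $0 + x \approx x$ on the \emph{left}, so to rewrite $x' + 0$ as $x'$ we must first invoke \eqref{eq:0-comm-centr} to swap, obtaining $x' + 0 = 0 + x' = x'$, and then apply (a) to conclude $(x')' = x$. Dually,
\[
1 \cdot x = (1' + x')' = (0 + x')' = (x')' = x,
\]
using (b) and (a) directly.

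There is no real obstacle: the content is a straightforward de Morgan computation. The only subtlety worth flagging is distinguishing which equations hold unconditionally in every orthogroupoid (namely $x + 1 \approx 1$, $0 + x \approx x$, and $1 + x \approx 1$) from the one that needs the standing hypothesis of this section, namely $x + 0 \approx 0 + x$; this is precisely why 0-commutativity enters only in the verification of $x \cdot 1 \approx x$, and is the reason the assumption has been introduced at all.
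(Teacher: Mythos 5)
Your computation is correct and is essentially the paper's own proof: both unpack $x\cdot y=(x'+y')'$ and reduce to axiom (b), $1+x\approx 1$, and $x''\approx x$, with $0$-commutativity entering only to turn $x'+0$ into $0+x'$ in the verification of $x\cdot 1\approx x$. Your explicit bookkeeping of which identities need the $0$-commutativity hypothesis is a nice clarification but does not change the argument.
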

\noindent
The following proposition shows that the variety of $0$-commutative orthogroupoids is a Church variety \cite[Definition 3.1]{Sal}.
\begin{proposition}\label{prop:Church variety}
0-commutative orthogroupoids form a Church variety, with witness term 
\[
q(x,y,z) = (x+z)\cdot (x'+y).
\]

\proof
Suppose $ \mathbf{A} $ is a 0-commutative orthogroupoid and $ a,b\in A $. Then, by Lemma \ref{aritmetica 0-commutativa}-(2), $ q(1,a,b) = (1+b)\cdot (0+a) = 1\cdot a = a\cdot 1 = a $. Also, $ q(0,a,b)= (0+b)\cdot (1+a)= b\cdot 1 = b $. 
\endproof
\end{proposition}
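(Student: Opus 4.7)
\medskip

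The plan is to verify directly the two defining identities of a Church term, namely $q(1,y,z)\approx y$ and $q(0,y,z)\approx z$, by unfolding the definition of $q$ and reducing each subterm using the available axioms and the arithmetic already established for $0$-commutative orthogroupoids.

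\smallskip

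For the first identity, I would compute
\[
q(1,y,z)=(1+z)\cdot(1'+y)=(1+z)\cdot(0+y).
\]
Using equation \eqref{eq:1comm} (which, by Proposition \ref{prop: OG is a variety}, holds in every orthogroupoid, hence in every $0$-commutative one) we replace $1+z$ by $1$, and using axiom (b) of Definition \ref{gruppoide ortogonale} we replace $0+y$ by $y$. This leaves $1\cdot y$, which equals $y$ by Lemma \ref{aritmetica 0-commutativa}(2).

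\smallskip

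For the second identity, I would compute
\[
q(0,y,z)=(0+z)\cdot(0'+y)=z\cdot(1+y),
\]
using axiom (b) for $0+z=z$ and Lemma \ref{aritmetica1}(i) to rewrite $0'$ as $1$. Then \eqref{eq:1comm} gives $1+y=1$, and finally Lemma \ref{aritmetica 0-commutativa}(2) yields $z\cdot 1=z$.

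\smallskip

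Since both $0$ and $1$ are term-definable constants of the signature and $q(x,y,z)$ is a term in the operations $+$, $'$, $1$ (using $x'+y'$ inside the dual operation $\cdot$), this establishes that every $0$-commutative orthogroupoid is a Church algebra with the same witness term and the same distinguished constants, so the variety is a Church variety in the sense of \cite{Sal}. There is no real obstacle here: everything reduces to Lemma \ref{aritmetica 0-commutativa} together with the one-sided absorption laws $0+x\approx x$ and $1+x\approx 1$, which is precisely why $0$-commutativity was isolated as the right hypothesis.
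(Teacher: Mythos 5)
Your proposal is correct and follows essentially the same route as the paper: both verify $q(1,y,z)\approx y$ and $q(0,y,z)\approx z$ by reducing $1+x$ to $1$, $0+x$ to $x$, and $x\cdot 1$ to $x$ via Lemma \ref{aritmetica 0-commutativa}(2). Your write-up is merely a bit more explicit about which axiom or lemma justifies each reduction.
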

According with the results proved in \cite{Sal}, central elements of a Church variety can be described in a very general way.
\begin{proposition}\label{prop: description of central elements in Church varieties}
If $\mb{A}$ is a Church algebra of type $ \nu $ and $ e\in A $, the following conditions are equivalent: 
\begin{itemize}
\item[(1)] e is central;
\item[(2)] for all $ a,b, \vec{a},\vec{b}\in A $: 
\begin{itemize}
\item[\textbf{a)}] $ q(e,a,a)=a $,
\item[\textbf{b)}] $ q(e,q(e,a,b),c)=q(e,a,c)=q(e,a,q(e,b,c)) $,
\item[\textbf{c)}] $ q(e,f(\vec{a}),f(\vec{b}))= f(q(e,a_{1},b_{1}),...,q(e,a_{n},b_{n})) $, for every $ f\in\nu $, 
\item[\textbf{d)}] $ q(e,1,0)=e $.
\end{itemize}

\end{itemize}
\end{proposition}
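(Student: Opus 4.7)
The plan is to prove the two directions separately. This result is a general fact about Church algebras, essentially due to Vaggione \cite{Vaggio} and worked out in the references cited in \cite{Sal}, so I would emulate that argument in our setting, exploiting the fact that all relevant constructions are ``componentwise'' once the decomposition is in place.

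For the direction $(1) \Rightarrow (2)$, I would invoke Theorem \ref{th: decomposizione Church algebras}. Centrality of $e$ yields a direct product decomposition $\mathbf{A} \cong \mathbf{A}_{e} \times \mathbf{A}_{e'}$; under this isomorphism $e$ corresponds to the pair $(1,0)$, since it maps to $1$ modulo $\theta(e,1)$ and to $0$ modulo $\theta(e,0)$. Because $q$ is a term, it is computed componentwise in the product, and
\[
q\bigl((1,0),(a_{1},a_{2}),(b_{1},b_{2})\bigr)=\bigl(q(1,a_{1},b_{1}),\,q(0,a_{2},b_{2})\bigr)=(a_{1},b_{2}).
\]
From this closed formula each of the four conditions is immediate: (a) and (d) are read off directly, (b) follows because $(a_{1},b_{2})$ depends only on the first component of the middle argument and the second component of the third, and (c) is just the fact that every fundamental operation $f$ acts componentwise in a direct product.

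For the direction $(2) \Rightarrow (1)$, I would construct candidate factor congruences out of $q$ itself. Set
\[
\theta_{1}=\{(a,b)\in A^{2}:q(e,a,0)=q(e,b,0)\},\qquad \theta_{0}=\{(a,b)\in A^{2}:q(e,1,a)=q(e,1,b)\}.
\]
Both are equivalence relations by construction; that they are congruences of $\mathbf{A}$ follows from (c), applied to the term $q(e,x,0)$ (respectively $q(e,1,x)$). Condition (d) gives $q(e,1,0)=e$, which together with (a) forces $(e,0)\in\theta_{0}$ and $(e,1)\in\theta_{1}$, so these two congruences dominate $\theta(e,0)$ and $\theta(e,1)$ respectively. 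To obtain that they \emph{are} a pair of factor congruences I would verify the two defining properties: permutability is witnessed by the element $q(e,b,a)$, which is $\theta_{1}$-related to $a$ (both have $q(e,-,0)$-value $q(e,a,0)$, as one checks using (b)) and $\theta_{0}$-related to $b$, giving $\theta_{0}\circ\theta_{1}=A\times A$; triviality of the intersection uses (b) and (a) to show that any element $a$ is recoverable from the pair $(q(e,a,0),q(e,1,a))$ via $q(e,q(e,1,a),q(e,a,0))=a$.

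The step I expect to be the main obstacle is establishing $\theta_{0}\cap\theta_{1}=\Delta_{A}$: this is where all four axioms (a)--(d) must genuinely cooperate, and one has to choose the right substitution into (c) and chain it with repeated applications of (b) to reconstruct a general element $a$ from its two projections. The remaining checks, by contrast, are essentially bookkeeping.
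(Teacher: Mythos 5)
The paper does not actually prove this proposition; it is quoted from \cite{Sal}, so there is no in-text argument to compare against. Your strategy is the standard one and is sound in outline: the forward direction via the decomposition $\mathbf{A}\cong\mathbf{A}/\theta(e,1)\times\mathbf{A}/\theta(e,0)$ and the computation $q\bigl((1,0),(a_{1},a_{2}),(b_{1},b_{2})\bigr)=(a_{1},b_{2})$ is correct, and building a factor pair from the kernels of $a\mapsto q(e,a,0)$ and $a\mapsto q(e,1,a)$ is exactly the argument of \cite{Sal}. For the converse you should still record the final inference explicitly: once $\theta_{0},\theta_{1}$ are shown to be a complementary factor pair, centrality needs this for $\theta(e,0),\theta(e,1)$ themselves; this follows because $\theta(e,0)\subseteq\theta_{0}$, $\theta(e,1)\subseteq\theta_{1}$ gives $\theta(e,0)\cap\theta(e,1)=\Delta_{A}$, while $a\;\theta(e,1)\;q(e,a,b)\;\theta(e,0)\;b$ already holds for the generated congruences since $q(e,a,b)\;\theta(e,1)\;q(1,a,b)=a$ and $q(e,a,b)\;\theta(e,0)\;q(0,a,b)=b$.

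Two of your displayed identities are, however, false as written, both for the same reason (arguments in the wrong slots), and one of them is precisely the step you flagged as the crux. First, $q(e,b,a)$ is $\theta_{1}$-related to $b$ and $\theta_{0}$-related to $a$, not the other way around: by (b), $q(e,q(e,b,a),0)=q(e,b,0)$ and $q(e,1,q(e,b,a))=q(e,1,a)$; use $q(e,a,b)$ as the witness instead. Second, and more seriously, $q(e,q(e,1,a),q(e,a,0))$ does not equal $a$: by (b) it collapses to $q(e,1,q(e,a,0))=q(e,1,0)$, which is $e$ by (d). The correct reconstruction is
\[
q\bigl(e,q(e,a,0),q(e,1,a)\bigr)=q\bigl(e,a,q(e,1,a)\bigr)=q(e,a,a)=a,
\]
using (b) twice and then (a); with this formula the proof that $\theta_{0}\cap\theta_{1}=\Delta_{A}$ goes through. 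Finally, a smaller point: condition (c) alone does not make $\theta_{1}$ a congruence, because $f(\vec{0})\neq 0$ in general; you need (c) to obtain $q(e,f(\vec{a}),f(\vec{0}))=q(e,f(\vec{b}),f(\vec{0}))$ and then (b) to replace the third argument by $0$. All of these are repairs rather than changes of strategy.
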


In case \textbf{A} is a $0$-commutative orthogroupoid, condition (a) reduces to 
\begin{equation}\label{eq: centrali (a)}
(e+a)\cdot (e'+a) = a.
\end{equation}
Conditions (b) read
\begin{equation}\label{eq: centrali (b1)}
(e+c)\cdot (e'+((e+b)\cdot(e'+a))= (e+c)\cdot(e'+a),
\end{equation}
\begin{equation}\label{eq: centrali (b2)} 
(e+c)\cdot(e'+a) = ((e+((e+c)\cdot(e'+b)))\cdot (e'+a).
\end{equation}
\noindent
Condition (c), whenever $ f $ is equal to the constant 1, expresses a property valid for every element. Indeed $ q(e,1,1)= (e + 1)\cdot (e'+1)= 1\cdot 1 = 1 $. If $ f $ coincides with the involution, (c) becomes
\begin{equation}\label{eq: centrali (c1)}
(e+b')\cdot (e'+a')=[(e+b)\cdot (e'+a)]'. 
\end{equation}
\noindent
Finally if $ f $ is equal to $+$, we get: 
\begin{equation}\label{eq: centrali (c2)}
(e+(c+d))\cdot (e'+(a+b))= ((e+c)\cdot (e'+a))+((e+d)\cdot (e'+b)).
\end{equation}

Condition (d) expresses a property that in fact holds for every element: $ e\cdot 1 = e $.
 
\begin{proposition}\label{prop: Boolean algebra of central for OG}
Let $\mb{A}$ be an orthogonal 0-commutative groupoid and $ \mathrm{Ce}({A})$ the set of central elements of $\mathbf{A}$, then $ \mathrm{Ce}(\mathbf{A})=\langle \emph{Ce}(A), +, \cdot, ', 0,1\rangle $ is a Boolean algebra.
\proof
By Theorem \ref{th: Boolean algebra of centrals} we only need to check that $ \vee $, $ \wedge $ and $^{*}$ correspond to $+,\, \cdot,\, '$, respectively. From Lemma \ref{aritmetica 0-commutativa} we obtain:
$$ x\vee y =q(x,1,y)=(x+y)\cdot(x'+1)=(x+y)\cdot 1= x+y $$ 
$$ x^{*} = q(x,0,1) = (x+1)\cdot (x'+0)= 1\cdot (0+x')= 1\cdot x' = x' $$
$$ x\wedge y = (x^{*}\vee y^{*})^{*}= (x'+y')' = x\cdot y $$
\endproof
\end{proposition}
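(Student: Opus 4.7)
The plan is to exploit Theorem \ref{th: Boolean algebra of centrals}: since Proposition \ref{prop:Church variety} already shows that $0$-commutative orthogroupoids form a Church variety with witness $q(x,y,z)=(x+z)\cdot(x'+y)$, the set $\mathrm{Ce}(\mathbf{A})$ automatically carries a Boolean algebra structure with respect to the derived operations $x\wedge y = q(x,y,0)$, $x\vee y = q(x,1,y)$, and $x^{*}=q(x,0,1)$. Hence the only thing left to check is that on central elements these derived operations coincide, respectively, with the restrictions of $\cdot$, $+$, and $'$. Once the three identifications $\vee = +$, ${}^{*}={}'$, $\wedge = \cdot$ are established, the proposition follows immediately.

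The first identification is a direct calculation: $x\vee y=q(x,1,y)=(x+y)\cdot(x'+1)$, and applying axiom (b) of Definition \ref{gruppoide ortogonale} we have $x'+1=1$, so by Lemma \ref{aritmetica 0-commutativa}(2) we obtain $x\vee y=(x+y)\cdot 1=x+y$. For the complement, $x^{*}=q(x,0,1)=(x+1)\cdot(x'+0)$; again axiom (b) gives $x+1=1$, while $0$-commutativity (equation \eqref{eq:0-comm-centr}) rewrites $x'+0$ as $0+x'=x'$, so Lemma \ref{aritmetica 0-commutativa}(2) yields $x^{*}=1\cdot x'=x'$.

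For the meet the cleanest route is via De Morgan rather than a direct expansion of $q(x,y,0)$: having already shown $\vee=+$ and ${}^{*}={}'$ on central elements, one gets $x\wedge y = (x^{*}\vee y^{*})^{*} = (x'+y')'$, which is exactly the definition of $x\cdot y$ given before Lemma \ref{aritmetica 0-commutativa}. Transporting the Boolean algebra structure of Theorem \ref{th: Boolean algebra of centrals} along these three identifications then gives the claim.

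I do not expect any real obstacle: the whole argument is a bookkeeping exercise relying on the Church-algebra machinery packaged in Theorem \ref{th: Boolean algebra of centrals} together with the absorption facts $x+1\approx 1$, $x'+1\approx 1$, $x\cdot 1\approx x$, $1\cdot x\approx x$ and the $0$-commutativity axiom. The only mildly subtle point is resisting the temptation to compute $x\wedge y$ directly from $q(x,y,0)$, where one would need additional identities that are not obviously available in the variety; going through De Morgan sidesteps this entirely.
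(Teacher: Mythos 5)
Your proposal is correct and follows essentially the same route as the paper: invoke Theorem \ref{th: Boolean algebra of centrals}, compute $x\vee y=q(x,1,y)=x+y$ and $x^{*}=q(x,0,1)=x'$ using $x+1\approx 1$, $0$-commutativity and Lemma \ref{aritmetica 0-commutativa}, and then obtain $\wedge=\cdot$ via De Morgan inside the Boolean algebra of central elements rather than by expanding $q(x,y,0)$ directly. No changes needed.
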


As done in \cite{CGKGLP} for the variety of involutive directoids, we aim at proving a general decomposition result for the variety of $0$-commutative orthogroupoids. Given $ \mathbf{A} $ a 0-commutative orthogroupoid and $ e $ a central element of $ \mathbf{A} $, we define the set 
$$ [0,e]=\{ x\in A:(x,e)\in R, x+e\approx e+x\},$$ 
where $R$ is a relation induced by $ \mathbf{A} $.

In the following part of this section we give a decomposition theorem in terms of central elements.
\begin{lemma}\label{lem:Asub=int}
Let $ \mathbf{A} $ be a 0-commutative orthogroupoid and ${e}$ a central element of $ \mathbf{A} $. Then $\mathbf A_e=\langle A_e, +_e,'^{_e},e\rangle$ is the algebra $\mathbf{[0,e]}=\langle [0,e], +, ^{e}, e\rangle $, where for any $a\in [0,e]$ $a^{e}=e\cdot a'$. 
\end{lemma}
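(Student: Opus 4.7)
The plan is to first establish the set equality $A_e = [0,e]$ and then verify that the operations match. Throughout I rely on the product decomposition $\mathbf{A} \cong \mathbf{A}_e \times \mathbf{A}_{e'}$ of Theorem \ref{th: decomposizione Church algebras}; write $\phi$ for this isomorphism, so that $\phi(y) = (e \wedge y, e' \wedge y)$. Both factors are $0$-commutative orthogroupoids (the variety is closed under homomorphic images); in particular the unit of $\mathbf{A}_e$ is $e \wedge 1 = e$, and $\phi(e) = (e, 0)$.

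For $A_e \subseteq [0,e]$, take $x \in A_e$. A routine application of the centrality axioms of Proposition \ref{prop: description of central elements in Church varieties}, particularly conditions (b) and (c), yields $e \wedge x = x$ and $e' \wedge x = 0$, so $\phi(x) = (x, 0)$. Applying the orthogroupoid identities $y + 1 \approx 1$ and \eqref{eq:1comm} inside $\mathbf{A}_e$ gives $x +_e e = e +_e x = e$, and these equalities transport back under $\phi^{-1}$ to $x + e = e + x = e$ in $\mathbf{A}$, so $x \in [0, e]$. For the reverse inclusion, if $x + e = e + x = e$, then the centrality identity \eqref{eq: centrali (a)}, namely $(e+x)(e'+x) = x$, simplifies under $e + x = e$ to $e(e'+x) = x$, i.e.\ $e \wedge x = x$, so $x \in A_e$.

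It remains to match the operations. For $+$: given $a, b \in A_e$, one has $e' \wedge (a+b) = (e' \wedge a) + (e' \wedge b) = 0 + 0 = 0$ by homomorphism of $y \mapsto e' \wedge y$, so $a+b \in A_e$ (by the product characterization $y \in A_e \Leftrightarrow e' \wedge y = 0$), whence $+_e(a, b) = e \wedge (a+b) = a+b$. For the involution, I need $'^{_e}(a) = e \wedge a' = e \cdot a'$ for $a \in A_e$; I would compute $\phi(e \cdot a') = (e, 0) \cdot (e \wedge a', e') = (e \wedge a', 0)$ by Lemma \ref{aritmetica 0-commutativa} (namely $1 \cdot y = y$ in $\mathbf{A}_e$ and $0 \cdot y = 0$ in $\mathbf{A}_{e'}$), and compare with $\phi(e \wedge a') = (e \wedge a', 0)$; injectivity of $\phi$ then yields $e \cdot a' = e \wedge a'$. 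The constant $e$ is shared by both algebras, completing the identification.

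The main obstacle will be this last identification $e \wedge a' = e \cdot a'$: in a general orthogroupoid no direct De Morgan--style manipulation rewrites $e(e'+a')$ as $ea'$, so the argument cannot proceed by purely equational term manipulation and must route through the Church-algebra product decomposition, genuinely using centrality of $e$ together with the $0$-$1$ arithmetic of $\cdot$ supplied by Lemma \ref{aritmetica 0-commutativa}.
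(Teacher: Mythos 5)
Your argument is correct, but it takes a different route from the paper's: you push everything through the isomorphism $\phi\colon\mathbf{A}\to\mathbf{A}_e\times\mathbf{A}_{e'}$, $\phi(y)=(e\wedge y,\,e'\wedge y)$, and compute componentwise, using that each factor is again a $0$-commutative orthogroupoid; the paper instead stays inside $\mathbf{A}$ and works equationally with the explicit term $q(e,y,z)=(e+z)\cdot(e'+y)$. Concretely, for $A_e\subseteq[0,e]$ the paper derives $x+e=e$ from the orthogroupoid identity $(z\cdot(z'+y))+z=z$ (via axiom (f) and Lemma \ref{aritmetica1}-(ii)) and $e+x=e$ from equation \eqref{eq: centrali (b2)} with $a=1$, $c=0$, where you obtain both by proving $x+_e e=e+_e x=e$ in $\mathbf{A}_e$ and transporting back along $\phi^{-1}$; the reverse inclusion and the treatment of $+$ are essentially the same in both arguments. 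Your closing claim, however, is mistaken: the identification $e\wedge a'=e\cdot a'$ \emph{can} be done by direct term manipulation, and the paper does exactly that — since $a\in[0,e]$ gives $(a,e)\in R$, Lemma \ref{aritmetica1}-(iv) yields $e'+a'=a'$, whence $q(e,a',0)=(e+0)\cdot(e'+a')=e\cdot a'$; no detour through the product is needed. Two cosmetic points: the assertion ``$e\wedge x=x$ and $e'\wedge x=0$ for $x\in A_e$'' deserves a line of justification (the first is condition (b) of Proposition \ref{prop: description of central elements in Church varieties}; the second follows, e.g., from $e'\wedge y=e'\cdot(e+y)$ together with $e+(e\cdot(e'+b))=e$, i.e.\ again \eqref{eq: centrali (b2)}), and in the step ``$(e'\wedge a)+(e'\wedge b)=0+0=0$'' the homomorphism $y\mapsto e'\wedge y$ actually lands in $+_{e'}$, so the correct chain is $e'\wedge(a+b)=0+_{e'}0=e'\wedge 0=0$ — the conclusion is unaffected. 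What your approach buys is uniformity (every verification reduces to $0$–$1$ arithmetic in the two factors via Lemma \ref{aritmetica 0-commutativa}); what the paper's buys is self-containedness, never leaving the algebra $\mathbf{A}$ and only invoking the centrality equations.
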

\begin{proof}
We first prove that $ A_e =[0,e] $. Suppose $ x\in A_e $, then, by definition of $ A_e $, $ x= e\wedge b $ for some $ b\in A $, i.e. $ x= e\wedge b = q(e,b,0)= e\cdot(e'+b)$. Notice that in any orthogroupoid, $ z' + (z'+(z'+y)')=z'+(z'+y)' $ (condition (f) in Definition \ref{gruppoide ortogonale}), thus by Lemma \ref{aritmetica2} $ (z'+(z'+y)')' + z = z$, i.e. $ (z\cdot (z'+y)) + z = z$. Hence $ x+ e=(e\cdot(e'+b))+e = e$. Furthermore notice that equation \eqref{eq: centrali (b2)}, with $ a=1 $ and $ c=0 $, reads: $ e = (e+(e\cdot (e'+b)). $ Hence we get that $ e+x= e + (e\cdot (e'+b))= e $, proving that $ x\in[0,e] $, hence we have $ A_e\subseteq [0,e] $. \\
For the converse inclusion suppose $ x\in[0,e] $, hence $ (x,e)\in R $ and $ x+e\approx e+x = e $. By the property of central elements expressed by equation \eqref{eq: centrali (a)}, $ x=(e+x)\cdot (e'+x)= e\cdot (e'+x)=q(e,x,0)= e\wedge x. $ 
Thus $ x\in A_{e} $, giving the desired inclusion.

We now prove that, for $x,y\in [0,e]$, $x+_ey=x+y$, where $+_e$ is the operation defined in \eqref{eq:opAe}.
 Let $ x,y\in [0,e] $, then, by definition, $ x+ e=e+x=e $ and $ y+e=e+y=e $. Then, $x+_ey= e\wedge (x+y) = q(e,x+y,0)= q(e,x,0) + q(e,y,0) $ by condition (c) in Proposition \ref{prop: description of central elements in Church varieties}. By definition of $ q $, $ q(e,x,0) + q(e,y,0)= (e\cdot (e'+x))+(e\cdot(e'+y)) $, but since $ e+x=e $ and $ e+y=e $, $ (e\cdot (e'+x))+(e\cdot(e'+y))= ((e+x)\cdot (e'+x))+((e+y)\cdot(e'+y))= x+y $, by equation \eqref{eq: centrali (a)}. Thus $ x+y\in A_e=[0,e] $ as desired. \\ 
As regards $^{e}$ notice that for any $x\in [0,e]$ we have $x^{e}=e\cdot x'=(e+0)\cdot(e'+x')=q(e,x',0)=e\land x'=x'^{_e}$.
\end{proof}

\begin{theorem}\label{th:decomposizione in intervalli}
Let $ \mathbf{A} $ be a $0$-commutative orthogroupoid and ${e}$ a central element of $ \mathbf{A} $. Then $ \mathbf{A}\cong \mathbf{[0,e]}\times\mathbf{ [0,e'] }$.
\proof
Follows directly from Theorem \ref{th: decomposizione Church algebras}, Proposition \ref{prop:Church variety} and Lemma \ref{lem:Asub=int}.
\endproof  
\end{theorem}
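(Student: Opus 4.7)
My plan is to deduce this theorem as a direct consequence of the Church-algebraic machinery already assembled in this section. Since Proposition \ref{prop:Church variety} shows that the variety of $0$-commutative orthogroupoids is a Church variety with witness term $q(x,y,z) = (x+z)\cdot(x'+y)$, the algebra $\mathbf{A}$ is a Church algebra. Therefore Theorem \ref{th: decomposizione Church algebras} applies to any central $e \in A$: the map $b \mapsto e \wedge b$ is a surjective homomorphism onto $\mathbf{A}_e$, and one obtains a direct product decomposition $\mathbf{A} \cong \mathbf{A}_e \times \mathbf{A}_{e'}$.

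To invoke this second factor I first need to check that $e'$ (the involution applied to $e$) is itself central. This is immediate from Proposition \ref{prop: Boolean algebra of central for OG}: the centre $\mathrm{Ce}(\mathbf{A})$ is a Boolean algebra in which the unary operation $^{*}$ coincides with $'$, so $e' = e^{*}$ belongs to the centre and is its Boolean complement. The remaining task is to identify the fibres $\mathbf{A}_e$ and $\mathbf{A}_{e'}$ with the intervals $\mathbf{[0,e]}$ and $\mathbf{[0,e']}$, together with the operations of the form \eqref{eq:opAe}; this identification is exactly what Lemma \ref{lem:Asub=int} provides, applied once to $e$ and once to $e'$. Chaining these observations yields $\mathbf{A} \cong \mathbf{[0,e]} \times \mathbf{[0,e']}$.

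I do not expect any substantive obstacle: the hard work has already been done in (i) exhibiting the witness term $q$ and verifying that the Church axioms hold, (ii) establishing that the centre is a Boolean algebra in which the involution plays the role of complementation, and (iii) proving the pointwise description $A_e = [0,e]$ with operations transferred correctly. The present statement merely packages these three ingredients, so the proof amounts to citing Proposition \ref{prop:Church variety}, Theorem \ref{th: decomposizione Church algebras}, and Lemma \ref{lem:Asub=int} in sequence.
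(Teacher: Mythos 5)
Your proposal is correct and follows exactly the same route as the paper: the theorem is obtained by combining Proposition \ref{prop:Church variety}, Theorem \ref{th: decomposizione Church algebras}, and Lemma \ref{lem:Asub=int}. Your additional remark that $e'$ is central (via the Boolean structure of $\mathrm{Ce}(\mathbf{A})$) is a small detail the paper leaves implicit, but it does not change the argument.
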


Proposition \ref{prop: description of central elements in Church varieties} states that the central elements of a Church variety are characterized by equations. This allows to prove the following
\begin{proposition}\label{prop: atomi di A e Ae}
Let $ \mathbf{A} $ be a $0$-commutative orthogroupoid, $ e\in\mathrm{Ce}(\mathbf{A}) $ and $ c\in A_e $. Then 
$$ c\in\mathrm{Ce}({A}) \Leftrightarrow c\in\mathrm{Ce}({A}_e) $$ 

\proof 
($\Rightarrow $) It follows from the fact that $0$-commutative orthogroupoids forms a Church variety, hence central elements are described by equations. By Theorem \ref{th: decomposizione Church algebras}, $ h:\mathbf{A}\rightarrow\mathbf{A}_e $ is an onto homomorphism such that for every $ a\in A_e $, $ h(a)=a $ and homomorphisms preserve equations. \\
($\Leftarrow $) Since central elements are characterized by equations, if $ c_1 $ is a central element of a $0$-commutative orthogroupoid $ \mathbf{A}_1 $ and $ c_2 $ is a central element of a $ 0 $-commutative orthogroupoid $ \mathbf{A}_2 $, then $ (c_1,c_2)\in\mathrm{Ce}(\mathbf{A}_1 \times\mathbf{A}_2) $, since equations are preserved by direct products. Suppose $ c\in\mathrm{Ce}({A}_{e}) $, the image of $ c $ by the isomorphism of Theorem \ref{th: decomposizione Church algebras} is $ (c,0) $. Since $ 0 $ is always central, we have that $ (c,0) $ is a central element in $ \mathbf{A}_e \times \mathbf{A}_{e'} $, implying that $ c\in\mathrm{Ce}(\mathbf{A}) $, as $ \mathbf{A}\cong\mathbf{A}_e\times\mathbf{A}_{e'} $. 
\endproof 
\end{proposition}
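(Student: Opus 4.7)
The overall strategy is to exploit the fact (noted before the statement) that in any Church variety, centrality of an element is equationally definable---specifically, by conditions (a)--(d) of Proposition \ref{prop: description of central elements in Church varieties}. Once this is in hand, both directions become routine preservation arguments: homomorphisms preserve identities (for the forward direction) and direct products preserve identities (for the converse).

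For the forward implication, I would invoke the canonical homomorphism $h \colon \mathbf{A} \to \mathbf{A}_e$, $h(b) = e \wedge b$, provided by Theorem \ref{th: decomposizione Church algebras}(1). The key small observation is that $h$ fixes $A_e$ pointwise: if $c \in A_e$ then $c = e \wedge b$ for some $b$, and by idempotence of $\wedge$ in the Boolean algebra $\mathrm{Ce}(\mathbf{A})$ of Theorem \ref{th: Boolean algebra of centrals} we have $e \wedge (e \wedge b) = e \wedge b = c$, hence $h(c) = c$. The equations witnessing centrality of $c$ in $\mathbf{A}$ therefore transfer under $h$ to the same equations witnessing centrality of $h(c) = c$ in $\mathbf{A}_e$.

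For the converse, I would use the isomorphism $\mathbf{A} \cong \mathbf{A}_e \times \mathbf{A}_{e'}$ of Theorem \ref{th:decomposizione in intervalli}, which identifies $a \in A$ with the pair $(e \wedge a,\, e' \wedge a)$. If $c \in A_e$ then a short calculation using Lemma \ref{aritmetica 0-commutativa} yields $e' \wedge c = e' \cdot (e + c) = e' \cdot e = (e + e')' = 0$, so $c$ is identified with $(c, 0)$. By hypothesis $c$ is central in $\mathbf{A}_e$, and $0$ is trivially central in $\mathbf{A}_{e'}$; since the centrality conditions are equational and direct products preserve equations, $(c, 0)$ is central in $\mathbf{A}_e \times \mathbf{A}_{e'}$, whence $c \in \mathrm{Ce}(\mathbf{A})$.

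The main obstacle is essentially bookkeeping: one must (i) verify that $h$ fixes $A_e$ pointwise, and (ii) confirm that the image of $c$ under the decomposition is exactly $(c, 0)$ and not some less convenient pair. Both reduce to short computations in the Boolean algebra $\mathrm{Ce}(\mathbf{A})$ together with the $0$-commutative axioms; the genuine conceptual content of the proposition is carried entirely by the equational description of central elements in a Church variety.
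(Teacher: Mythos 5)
Your proof is correct and follows essentially the same route as the paper's: the forward direction via the retraction homomorphism $h(b)=e\wedge b$ of $\mathbf{A}$ onto $\mathbf{A}_e$, and the converse via the decomposition $\mathbf{A}\cong\mathbf{A}_e\times\mathbf{A}_{e'}$ identifying $c$ with $(c,0)$, both resting on the equational characterization of centrality in a Church variety. One small correction: the identity $e\wedge(e\wedge b)=e\wedge b$ for \emph{arbitrary} $b\in A$ should be justified by condition (b) of Proposition \ref{prop: description of central elements in Church varieties} applied to the central element $e$ (with third argument $0$), not by idempotence in the Boolean algebra $\mathrm{Ce}(\mathbf{A})$, since $b$ need not be central.
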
 

In Proposition \ref{th: Boolean algebra of centrals} we have proved that $ \mathrm{Ce}(\mathbf{A}) $ is a Boolean algebra. We can consider the set of its atoms and denote them by $At(\mathbf{A}) $.
\begin{lemma}\label{lemma: atomi}
If $ \mathbf{A} $ is an orthogroupoid and $ e $ is an atomic central element of $ \mathbf{A} $, then $ At(\mathbf{A}_{e'})=At(\mathbf{A})\setminus\{e\} $.
\proof
($ \supseteq $) Since $ e $ is an atom of the Boolean algebra $ \mathrm{Ce}(\mathbf{A}) $, for any other atomic central element $ c\in\mathbf{A} $, $ c\cdot e = e\cdot c = 0 $, therefore $ e' + c' =1 $. By equation \eqref{eq: centrali (a)} we get $ (e+c')\cdot (e'+c')=c' $, hence $ (e+c')\cdot 1= e+c'=c' $. Thus $ eRc' $ (for R the relation induced by the orthogroupoid), then $ cRe' $, by Lemma \ref{aritmetica1}. Hence $ c\in\mathbf{A}_{e'} $. By Proposition \ref{prop: atomi di A e Ae}, $ c\in\mathrm{Ce}({A}_{e'}) $. Moreover, if $ d $ is a central element of $ \mathbf{A}_{e'} $ such that $ d <\ c $, then $ d $ is a central element of $ \mathbf{A} $ and since $ c\in At(\mathbf{A}) $ then necessarily $ d=0 $. \\
$ (\subseteq ) $ Suppose $ c\in At(\mathbf{A}_{e'}) $, then in particular $ c $ is a central element of $ \mathbf{A}_{e'} $ and, by Proposition \ref{prop: atomi di A e Ae}, $ c\in\mathrm{Ce}(\mathbf{A}) $. Let $ d\in\mathrm{Ce}(\mathbf{A}) $, with $ c <\ d $, then we have $ d\leq e' $ and therefore $ d\in\mathrm{Ce}(\mathbf{A}_{e'}) $ by Proposition \ref{prop: atomi di A e Ae}. As, by assumption, $ c\in At(\mathbf{A}_{e'})$ then $ d=0 $, which shows that $ c $ is an atomic central. We now claim that $ c\neq e $. Indeed, suppose by contradiction that $ c= e $, then since $ c\leq e' $ we have $ e\leq e' $, i.e. $ e=e\cdot e' = 0 $ which is a contradiction, as $ e $ is atomic central by hypothesis.
\endproof
\end{lemma}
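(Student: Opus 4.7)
The plan is to prove the two inclusions separately, leveraging Proposition \ref{prop: atomi di A e Ae} (which transfers central elements between $\mathbf{A}$ and $\mathbf{A}_{e'}$) together with the Boolean algebra structure on $\mathrm{Ce}(\mathbf{A})$ granted by Theorem \ref{th: Boolean algebra of centrals} and Proposition \ref{prop: Boolean algebra of central for OG}. The guiding picture is that $A_{e'}$ realises the interval $[0,e']$ of the Boolean algebra of centrals, so the atoms below $e'$ should coincide with the atoms of $\mathrm{Ce}(\mathbf{A})$ that differ from $e$.

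For the inclusion $At(\mathbf{A})\setminus\{e\}\subseteq At(\mathbf{A}_{e'})$, I would pick $c\in At(\mathbf{A})\setminus\{e\}$. Since $c$ and $e$ are distinct atoms of the Boolean algebra $\mathrm{Ce}(\mathbf{A})$, they meet to $0$, i.e.\ $c\cdot e=0$, which is equivalent to $c\leq e'$ in the Boolean order. Via the identification $A_{e'}=[0,e']$ supplied by Lemma \ref{lem:Asub=int}, this places $c$ inside $A_{e'}$, and Proposition \ref{prop: atomi di A e Ae} then gives $c\in\mathrm{Ce}({A}_{e'})$. To keep $c$ atomic inside $\mathbf{A}_{e'}$, I would assume a strictly smaller nonzero central $d$ of $\mathbf{A}_{e'}$ and use Proposition \ref{prop: atomi di A e Ae} to pull it back to a central of $\mathbf{A}$ strictly between $0$ and $c$, contradicting the atomicity of $c$ in $\mathbf{A}$.

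For the reverse inclusion, I would take $c\in At(\mathbf{A}_{e'})$; Proposition \ref{prop: atomi di A e Ae} immediately yields $c\in\mathrm{Ce}(\mathbf{A})$. If some central $d$ of $\mathbf{A}$ satisfies $0<d<c$, then $d\leq c\leq e'$ confines $d$ to $A_{e'}$, where again Proposition \ref{prop: atomi di A e Ae} makes it central, contradicting atomicity of $c$ in $\mathbf{A}_{e'}$; hence $c\in At(\mathbf{A})$. The disequality $c\neq e$ follows because $c\leq e'$ together with $c=e$ would give $e\leq e'$, so $e=e\cdot e'=0$, contradicting the assumption that $e$ is an atom (in particular nonzero).

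The step most likely to cause friction is the translation between the Boolean order on $\mathrm{Ce}(\mathbf{A})$ and the orthogroupoidal picture: the equivalence of ``$c\cdot e=0$'' with ``$c\leq e'$'' with ``$c\in[0,e']=A_{e'}$'' relies on Proposition \ref{prop: Boolean algebra of central for OG} (identifying $\cdot$ with the Boolean meet on centrals) and on Lemma \ref{lem:Asub=int} (identifying $A_{e'}$ with the interval in question). Once this dictionary is set up, the rest is routine propagation of atomicity back and forth between $\mathbf{A}$ and $\mathbf{A}_{e'}$.
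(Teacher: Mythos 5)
Your proposal is correct and follows essentially the same route as the paper: both directions transfer centrality and atomicity back and forth via Proposition \ref{prop: atomi di A e Ae}, and the final $c\neq e$ argument ($c\leq e'$ and $c=e$ forcing $e=e\cdot e'=0$) is identical. The only cosmetic difference is that you place $c$ in $A_{e'}$ by reading $c\cdot e=0$ as $c\leq e'$ in the Boolean algebra of centrals and invoking Lemma \ref{lem:Asub=int}, whereas the paper unwinds this directly through equation \eqref{eq: centrali (a)} to get $(c,e')\in R$; you also silently correct the paper's typo $c<d$ to $d<c$ in the $\subseteq$ direction.
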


The above lemma allows to prove the following 
\begin{theorem}\label{th:decomposizione in prodotto di d.i.} 
Let $ \mathbf{A} $ be a $0$-commutative orthogroupoid such that $ \mathrm{Ce}(\mathbf{A}) $ is an atomic Boolean algebra with countably many atoms, then
$$ \mathbf{A}=\prod_{e\in At(\mathbf{A})} \mathbf{A}_e $$
is a decomposition of $ \mathbf{A} $ as a product of directly indecomposable algebras. 
\proof
The argument proceeds by induction on the number of elements of $ At(\mathbf{A}) $. If $ 1 $ is the only central atomic element, then $ \mathbf{A} $ is directly indecomposable and clearly $ \mathbf{A}=\mathbf{A}_1 $. If there is an atomic central element $ e\neq 1 $, then $ \mathbf{A}=\mathbf{A}_e\times\mathbf{A}_{e'} $ by Theorem \ref{th: decomposizione Church algebras}. On the other hand $ \mathrm{Ce}(\mathbf{A}_{e})=\{0,e\} $, because if $ \mathbf{A}_e $ had another element, say $ d $, then $ d $ would be a central element of $ \mathbf{A} $ in virtue of Proposition \ref{prop: atomi di A e Ae} and $ 0 <\ d <\ e $ contradicting the fact that $ e $ is an atom. Consequently $ \mathbf{A}_e $ is directly indecomposable. By Lemma \ref{lemma: atomi} $ At(\mathbf{A}_{e'})=At(\mathbf{A})\setminus\{e\} $ and by induction hypothesis, $ \mathbf{A}_{e'}=\prod_{c\in At(\mathbf{A}_{e'})} \mathbf{A}_c $, whence the result readily follows.
\endproof 
\end{theorem}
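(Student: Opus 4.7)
The plan is to argue by induction on the cardinality of $At(\mathbf{A})$, exploiting the machinery already set up in Theorems \ref{th: decomposizione Church algebras} and \ref{th:decomposizione in intervalli}, together with Proposition \ref{prop: atomi di A e Ae} and Lemma \ref{lemma: atomi}. The base case is when $1$ is the only atomic central element: the Boolean algebra $\mathrm{Ce}(\mathbf{A})$ is then the two-element one, so $\mathbf{A}$ has no nontrivial factor congruences and is therefore directly indecomposable. In this situation $\mathbf{A}=\mathbf{A}_1$, and there is nothing left to prove.

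For the inductive step, I would pick some atomic central element $e\neq 1$. By Theorem \ref{th:decomposizione in intervalli} we immediately get $\mathbf{A}\cong \mathbf{A}_e\times\mathbf{A}_{e'}$. The next task is to verify that the factor $\mathbf{A}_e$ is directly indecomposable. For this, I would compute $\mathrm{Ce}(\mathbf{A}_e)$: by Proposition \ref{prop: atomi di A e Ae}, any central element $d$ of $\mathbf{A}_e$ would also be central in $\mathbf{A}$, and it would satisfy $d\leq e$ in the Boolean algebra $\mathrm{Ce}(\mathbf{A})$; atomicity of $e$ then forces $d\in\{0,e\}$. Thus $\mathrm{Ce}(\mathbf{A}_e)=\{0,e\}$, so $\mathbf{A}_e$ is directly indecomposable.

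For the other factor, Lemma \ref{lemma: atomi} gives $At(\mathbf{A}_{e'})=At(\mathbf{A})\setminus\{e\}$, so $\mathbf{A}_{e'}$ has strictly fewer atomic central elements than $\mathbf{A}$. The inductive hypothesis then supplies a decomposition $\mathbf{A}_{e'}\cong\prod_{c\in At(\mathbf{A}_{e'})}\mathbf{A}_c$ into directly indecomposable factors. Combining this with $\mathbf{A}\cong\mathbf{A}_e\times\mathbf{A}_{e'}$ and $At(\mathbf{A}_{e'})=At(\mathbf{A})\setminus\{e\}$ yields the desired decomposition $\mathbf{A}\cong\prod_{e\in At(\mathbf{A})}\mathbf{A}_e$.

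The main subtlety I would expect to face is the passage from finite to countable $At(\mathbf{A})$: a bare induction handles only the finite case, and the countable case really needs the fact that, in a Church variety with an atomic centre, the factorisations associated with the atoms are compatible in the sense that the natural map $\mathbf{A}\to\prod_{e\in At(\mathbf{A})}\mathbf{A}_e$ (defined componentwise by $a\mapsto (e\wedge a)_{e}$) is an isomorphism. I would justify injectivity via the fact that the meet of all atoms of an atomic Boolean algebra is $0$ (so if $e\wedge a=e\wedge b$ for every atom $e$, then $a=b$), and surjectivity by exhibiting, for each tuple $(a_e)_e$ with $a_e\in\mathbf{A}_e$, a preimage obtained using the witness term $q$ of Proposition \ref{prop:Church variety}. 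Once this coherence is in place, the inductive argument above applies uniformly, and the theorem follows.
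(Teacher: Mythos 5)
Your inductive argument is, step for step, the paper's own proof of the finite case: the same base case, the same use of Theorem~\ref{th:decomposizione in intervalli} (equivalently Theorem~\ref{th: decomposizione Church algebras}) to split off $\mathbf{A}_e\times\mathbf{A}_{e'}$, the same computation $\mathrm{Ce}(\mathbf{A}_e)=\{0,e\}$ via Proposition~\ref{prop: atomi di A e Ae}, and the same appeal to Lemma~\ref{lemma: atomi} to reduce the atom set. That part is correct and needs no further comment.

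Where you depart from the paper is in flagging that induction on $|At(\mathbf{A})|$ only covers finitely many atoms, while the statement allows countably many. You are right to worry --- the paper's proof has the same blind spot --- but your proposed repair does not close the gap. The injectivity claim needs more than ``the meet of all atoms is $0$'': what is actually required is $\bigcap_{e\in At(\mathbf{A})}\theta(e,1)=\Delta$, i.e.\ that $e\wedge a=e\wedge b$ for all atoms $e$ forces $a=b$ for arbitrary (not necessarily central) $a,b$. More seriously, surjectivity genuinely fails: the witness term $q$ can recombine only finitely many coordinates, and no finite term can produce a preimage of an arbitrary tuple in an infinite product. Indeed the statement itself breaks down for infinitely many atoms. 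Take $\mathbf{A}$ to be the finite--cofinite Boolean algebra on $\omega$, viewed as a $0$-commutative orthogroupoid with $x+y:=x\vee y$; every element is central, so $\mathrm{Ce}(\mathbf{A})=\mathbf{A}$ is atomic with countably many atoms, and each $\mathbf{A}_e$ is the two-element algebra. But then $\prod_{e\in At(\mathbf{A})}\mathbf{A}_e$ has cardinality $2^{\aleph_0}$ while $\mathbf{A}$ is countable, so the natural map is a proper subdirect embedding, not an isomorphism. The honest conclusion is that the theorem should be read as a statement about finitely many atoms (or weakened to a subdirect representation in the infinite case); your instinct that something extra is needed there was sound, but the extra step you sketch cannot be carried out.
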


\section{Amalgamation property}

A \emph{V-formation} (Figure \ref{amalgamo}) is a $5$-tuple $\left(
\mathbf{A},\mathbf{B}_{1},\mathbf{B}_{2},i,j\right)  $ such that
$\mathbf{A,B}_{1},\mathbf{B}_{2}$ are similar algebras, and
$i:\mathbf{A\rightarrow B}_{1},j:\mathbf{A\rightarrow B}_{2}$ are embeddings.
A class $\mathcal{K}$ of similar algebras is said to have the
\emph{amalgamation property} if for every V-formation with
$\mathbf{A},\mathbf{B}_{1},\mathbf{B}_{2}\in\mathcal{K}$ and $A\neq\emptyset$ there
exists an algebra $\mathbf{D}\in\mathcal{K}$ and embeddings $h:\mathbf{B}%
_{1}\mathbf{\rightarrow D},k:\mathbf{B}_{2}\mathbf{\rightarrow D}$ such that
$k\circ j=h\circ i$. In such a case, we also say that $k$ and $h$
\emph{amalgamate} the V-formation $\left(  \mathbf{A},\mathbf{B}%
_{1},\mathbf{B}_{2},i,j\right)  $. $\mathcal{K}$ is said to have the
\emph{strong amalgamation property} if, in addition, such embeddings can be
taken s.t. $h\circ i(\mathbf{A}) =k\circ j\left(  \mathbf{A}\right)  =$ $h\left(  \mathbf{B}%
_{1}\right)  \cap k\left(  \mathbf{B}_{2}\right)  $.

\begin{figure}[h]
\begin{equation}
\vcenter{\xymatrix@R10pt{
                      & & \algb_2 \; \ar @{^(-->} [rrd]  ^ {k} & &       \\
 \alga \; \ar @{^(->} [rru] ^{j} \ar @{_(->} [rrd] _ {i}& &       & & \algd  \\
                      & & \algb_1 \;  \ar @{_(-->} [rru] _{h} & &     
}}
\end{equation}
 \caption{A generic amalgamation schema}\label{amalgamo}
\end{figure}

Amalgamations were first considered for groups by Schreier~\cite{Sch27} in the
form of amalgamated free products. The general form of the $\mathsf{AP}$ was
first formulated by Fra\"{\i}sse~\cite{Fra54}, and the significance of this
property to the study of algebraic systems was further demonstrated in
J\'{o}nsson's pioneering work on the topic~\cite{Jon56, Jon60, Jon61, Jon62}.
The added interest in the $\mathsf{AP}$ for algebras of logic is due to its
relationship with various syntactic interpolation properties. We refer the
reader to~\cite{MMT12} for relevant references and an extensive discussion of
these relationships.

In this section, we show that the variety of orthogroupoids has the strong
amalgamation property.

\begin{theorem}\label{th: amalgamation property}
The variety of orthogroupoids has the strong amalgamation property.
\end{theorem}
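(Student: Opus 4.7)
My plan is to realize the amalgam $\mathbf{D}$ as a set-theoretic pushout of $\mathbf{B}_1$ and $\mathbf{B}_2$ over $\mathbf{A}$. After identifying $\mathbf{A}$ with $i(\mathbf{A}) \subseteq \mathbf{B}_1$ and $j(\mathbf{A}) \subseteq \mathbf{B}_2$, I take $D := B_1 \cup B_2$ with $A = B_1 \cap B_2$; the constants $0, 1$ and the involution $'$ are inherited componentwise, consistently on $A$ because $i, j$ are orthogroupoid embeddings. I define the operation on $D$ by $x + y := x +^{\mathbf{B}_1} y$ when $x, y \in B_1$, by $x + y := x +^{\mathbf{B}_2} y$ when $x, y \in B_2$ (these agree on $A \times A$), and by $x + y := 1$ in the remaining \emph{mixed} case, i.e.\ when one of $x, y$ lies in $B_1 \setminus A$ and the other in $B_2 \setminus A$. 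Letting $h : \mathbf{B}_1 \hookrightarrow \mathbf{D}$ and $k : \mathbf{B}_2 \hookrightarrow \mathbf{D}$ denote the inclusions, each is injective and preserves all operations by construction, and $h(B_1) \cap k(B_2) = A = h \circ i(A) = k \circ j(A)$, so the strong amalgamation property will follow once $\mathbf{D}$ is shown to belong to the variety of orthogroupoids.

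Next, I verify the axioms of Definition \ref{gruppoide ortogonale} together with the identity $1 + x \approx 1$ from Proposition \ref{prop: OG is a variety} (which replaces the quasi-equation (d)). Since $0, 1 \in A$, axioms (a), (b), (c), and $1 + x = 1$ reduce immediately to the corresponding facts in $\mathbf{B}_1$ or $\mathbf{B}_2$, the relevant operands sharing a side. Axiom (f) splits in two sub-cases: if $x, y$ lie on the same side it is inherited; if they are mixed, then $x + y = 1$ and both $x + 1 = y + 1 = 1$ by (b).

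The main obstacle is axiom (e), $(((z+y)'+(z+x))' + (z+y)') + z' \approx z'$. I plan a case analysis on the ``side'' of each of $x, y, z$, i.e.\ membership in $B_1 \setminus A$, $B_2 \setminus A$, or $A$. When the whole expression lives inside a single $\mathbf{B}_i$, (e) is inherited. The genuinely interesting cases are those in which some product appearing in (e) is mixed and therefore equals $1$, so its complement equals $0$; applying axiom (b) collapses the identity to one of the forms $(w' + 0) + z' = z'$ (with $w = z + x$) or $(z + y)' + z' = z'$, each living inside a single $\mathbf{B}_i$. Each simplified form is itself an instance of axiom (e) in $\mathbf{B}_i$ under a clever substitution, e.g.\ $\hat{y} := z'$ (which makes $\hat{z} + \hat{y} = 1$ and recovers the first shape), or $\hat{y} := \hat{x}$ (which recovers the second). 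The most delicate sub-case is when $z \in A$ and $x, y$ sit on opposite sides: an intermediate summand may then land in $A$, and here I would exploit that elements of $A$ satisfy the same $R$-relations inside both $\mathbf{B}_1$ and $\mathbf{B}_2$ (since $i, j$ are embeddings), so that any $u \in A$ with $z + u = u$ on one side satisfies it on the other, enabling the substitution to be performed on whichever side currently hosts the computation. Once (e) is verified in all sub-cases, $\mathbf{D}$ is an orthogroupoid and the strong amalgamation of the V-formation follows.
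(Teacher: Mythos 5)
Your construction is exactly the paper's: take $D=B_1\cup B_2$ with $A=B_1\cap B_2$, keep the operation of each $\mathbf{B}_i$ on same-side pairs, set every mixed sum to $1$, and verify the orthogroupoid axioms by cases, axiom (e) being the only nontrivial one. Your handling of the reduced instances of (e) by substitution into (e) itself (e.g.\ $y:=z'$ to obtain $((z+x)'+0)+z'=z'$, which is genuinely needed since $w+0\approx w$ is \emph{not} an orthogroupoid identity, and $x:=z+x$ resp.\ $y:=z+y$ in the sub-case where $z\in A$ and an intermediate sum lands in $A$) is sound and, if anything, slightly more careful than the paper's own computation.
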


\begin{proof}
Let us suppose that we have a V-formation like the solid part of figure
\ref{amalgamo}, and without loss of generality, let us assume that $B_{1}\cap
B_{2}=A$ and $ i(a) = j(a) $, for every $ a\in A $. We are going to give an explicit construction of the amalgam of this
V-formation. Let us consider $ D=B_{1}\cup B_{2} $. We define an operation $ \oplus $ on $ D $ as follows: 
\begin{equation}\label{eq:oplus}
x \oplus y = \left\{
\begin{array}
[c]{l}%
x+^{B_i} y, \;\;\;\;\text{ if }x,y\in B_{i};\\
1, \;\;\;\;\text{ otherwise. }
\end{array}
\right. 
\end{equation}
From now on we will drop superscripts whenever no danger of confusion is impending.
We can define a complementation $ ^{*} $ in $ D $ as follows: 
\begin{equation}\label{eq:star}
x^{*}=x'^{^{\mb B_i}} 
\end{equation}
Clearly the element 1 belongs to $ D $. We show that $ \mathbf{D}=\langle D, \oplus, ^{*}, 1 \rangle $ is an orthogroupoid. \\
(a) $ 0\oplus x = x $ holds since $ 0_{_D}= 0_{B_1}=0_{B_2} $. \\
(b) $ x\oplus 1 = 1 $, since $ 1_{_D}= 1_{B_1}=1_{B_2} $. \\
(c) notice that $ x\in B_{i} $ with $ i=1,2 $ if and only if $ x'\in B_{i} $, hence $ x\oplus x^{*}= x+x'= 1. $ \\
(d) due to Proposition \ref{prop: OG is a variety} it is enough to show that $ x+1=1+x=1 $. Since $ 1_{D}= 1_{B_1}=1_{B_2} $, $ 1\oplus x= 1+ x =1 $. \\
(e) we have to prove that 
\begin{equation}\label{eq:(e) in D} 
(((x\oplus y)^{*}\oplus(x\oplus z))^{*}\oplus (x\oplus y)^{*})\oplus x^{*}=x^{*}. 
\end{equation} 
We will proceed through a case-splitting argument. \\
\textbf{Case 1:} $ x,y,z\in B_{i} $, where $ i\in\{ 1,2\} $. Then equation \eqref{eq:(e) in D} holds since it holds in $ B_{i} $. \\
\textbf{Case 2:} $ x,y\in B_i $, $ z\in B_{j} $, $ z\not \in B_i $, with $ i,j\in \{ 1, 2\} $ and $ i\neq j $. Then $ x\oplus y = x + y $, while $ x\oplus z = 1 $. Then equation \eqref{eq:(e) in D} reads: $ (((x+y)'+1)'+(x+y)')+x'= (0+(x+y)')+x'= (x+y)' + x' = x' $, which holds by Lemma \ref{aritmetica1} (ii). \\
\textbf{Case 3:} $ x\in B_i $, $ y,z\in B_j $, $ x\not\in B_j $, with $ i\neq j $. We then have $ x\oplus y = 1 = x\oplus z $. Therefore $ (1^{*}\oplus (x \oplus z))^{*}\oplus 1^{*})\oplus x^{*}=((0\oplus 1)^{*}\oplus 0)\oplus x^{*}= (0\oplus 0)\oplus x^{*}= 0\oplus x^{*}= x^{*}.  $ \\
\textbf{Case 4:} $ x,z\in B_i $, $ y\in B_j $, $ y\not\in B_i $, with $ i\neq j $. Then $ x\oplus y = 1 $ and $ x\oplus z = x + z $. Equation \eqref{eq:(e) in D} reads: $ ((0 + (x + z))' + 0) + x' = (x + z)' + x' = x' $, by Lemma \ref{aritmetica1} (ii). \\
It can be verified that no other case is possible.\\
(f) $ x\oplus (x\oplus y) = x\oplus y $ reduces to $ x+(x+y)=x+y $ if $ x,y\in B_{i} $ and clearly holds. In case $ x\in B_i $ and $ y\in B_j $ and $ x,y\not\in B_i\cap B_j $, with $ i\neq j $, then we get $ x\oplus 1 = 1 $ which always holds. Similarly for $ y\oplus (x\oplus y) = x\oplus y $. \\
It is clear that $ \mathbf{B}_i $ is a subalgebra of $ \mathbf{D} $. Furthermore, by construction, the intersection of $\mathbf{B}_{1}$ and $\mathbf{B}_{2}$ as subalgebras of $\mathbf{D}$ is the algebra $\mathbf{A}$.
Therefore, we have proven that $\mathbf{D}$ is a strong amalgam of $\mathbf{B}_{1}$ and $\mathbf{B}_{2}$.
\end{proof}
As a byproduct of the previous theorem it follows that the orthogonal relational systems induced by the orthogroupoids in a V-formation are amalgamated, as relational structures, in the orthogonal relational system induced by their amalgam.
\medskip

%

{\bf Acknowledgement}
The work of the second author is supported by the Project ``New perspectives on residuated posets'' by GA\v{C}R-Grant agency of Czech republic and FWF-Austrian Science Foundation project I 1923-N25. The third author gratefully acknowledges the support of the Italian Ministry of Scientific
Research (MIUR) within the FIRB project ``Structures and Dynamics of Knowledge
and Cognition", Cagliari: F21J12000140001. Finally, we all thank Francesco Paoli and an anonymous referee for their valuable suggestions.

\end{document}